\documentclass[11pt,reqno]{amsart}
\usepackage{amsmath,amssymb,mathtools}
\usepackage{enumitem}
\usepackage[hidelinks]{hyperref}
\usepackage[margin=1in]{geometry}

\newtheorem{theorem}{Theorem}[section]
\newtheorem{proposition}[theorem]{Proposition}
\newtheorem{lemma}[theorem]{Lemma}

\theoremstyle{remark}
\newtheorem{remark}[theorem]{Remark}

\newcommand{\supp}{\operatorname{supp}}
\newcommand{\Homeo}{\operatorname{Homeo}}
\newcommand{\Fix}{\operatorname{Fix}}
\numberwithin{equation}{section}

\title[ORDER AUTOMORPHISM OF A DLAB GROUP]{AN ORDER AUTOMORPHISM OF A DLAB GROUP NOT INDUCED BY CONJUGATION}
\author{Ting Gong}
\address{Department of Mathematics, University of Washington, Seattle, WA 98195}
\email{tgong2@uw.edu}

\author{Yong Yang}
\address{Department of Mathematics, Texas State University, San Marcos, TX 78666}
\email{yang@txstate.edu}

\author{Michael Ruofan Zeng}
\address{Department of Mathematics, University of Washington, Seattle, WA 98195}
\email{zengrf@uw.edu}
\date{}

\begin{document}
\begin{abstract}
Let $G=D_{\langle2\rangle}([0,1])$ be equipped with either of its two Dlab orders.  There exists an order automorphism $\alpha$ of $G$ such that, for every rank-one subgroup $H\leq\mathbb R_{>0}^{\times}$, every one of the six corresponding Dlab groups $A$ listed below, equipped with any of its linear orders, every order-preserving embedding $e:G\hookrightarrow A$, and every $u\in A$, there exists $f\in G$ such that $e(\alpha(f))\neq u^{-1}e(f)u$.
\end{abstract}

\subjclass[2020]{06F15, 20F60, 20E36}
\keywords{ordered group, Dlab group, order automorphism, piecewise linear homeomorphism, conjugation}
\maketitle

\section{Introduction}
An order automorphism of an ordered group is an automorphism preserving its specified linear order.  Kopytov and Medvedev posed Problem~21.149 in the \emph{Kourovka Notebook}; in its original form, the problem asks whether a Dlab group can have an order automorphism that is not inner \cite{KN}.  Van Doorn, Judin, Monticone and Morrison report that they obtained an affirmative solution to this original formulation and sent it to the editors of the \emph{Kourovka Notebook}.  According to their account, one of the problem authors checked the solution and agreed that it was correct, but then explained that the intended question was stronger: whether there exists an order automorphism of a Dlab group that is not induced by conjugation by an element of a possibly larger Dlab group \cite[Appendix~A]{vDJM}.  They further state that the problem statement was revised to reflect this intended meaning and that the stronger problem remained open.  Thus the distinction is essential: an automorphism may be non-inner in the original Dlab group but still be induced by conjugation after that group is embedded in a larger Dlab group.  The theorem below addresses this clarified form of Problem~21.149 for the six rank-one Dlab groups considered here.

Put
\[
I=[0,1],\qquad G=D_{\langle2\rangle}(I).
\]
For a homeomorphism $g$, write $\supp(g)=\{x:g(x)\ne x\}$.  A support component of $g$ is a component of $\supp(g)$.
The group $G$ has two Dlab orders, described in Section~\ref{sec:models}.  We equip $G$ with either one of them and fix that choice throughout.

If a group $B$ acts by increasing homeomorphisms on an interval, write $\Fix(B)$ for its common fixed-point set and call a component of the complement of $\Fix(B)$ an \emph{action component} of $B$.  By the \emph{standard action} of $G$ we mean its defining action on $(0,1)$.

We use the notation for the six larger groups from \cite{Med}.  On $I$, $D_H(I)$ consists of the elements that are the identity near both endpoints, $D_{H*}(I)$ of those that are the identity near $0$, $D_{*H}(I)$ of those that are the identity near $1$, and $\overline D_H(I)$ is the full interval group.  On $\overline{\mathbb R}$, $D_H$ consists of the elements with bounded support and $D_{H*}$ of those whose support is bounded below.

We construct an increasing homeomorphism $h$ of $[0,1]$ which has finitely many affine pieces, all with slopes in $\langle2\rangle$, on every compact subinterval of $(0,1]$.  The support components of $h$ form a strictly descending sequence toward $0$.  Conjugation by $h$ normalizes $G$ and preserves its order.  By contrast, for each element of the six larger Dlab groups considered here, the components of its support are well ordered from left to right.

For an arbitrary order-preserving embedding $e:G\hookrightarrow A$, we study the components of the complement of the common fixed-point set of $e(G)$.  They have a least member, denoted later by $J_0$.  Subgroups supported on disjoint source intervals commute, and their least nontrivial components inside $J_0$ are disjoint and occur in the same order as the source intervals.  The resulting nested intervals determine the points of $(0,1)$ and yield a conjugacy between the standard action of $G$ and the action of $e(G)$ on $J_0$.

The main result is the following.

\begin{theorem}\label{thm:main}
Equip $G=D_{\langle2\rangle}(I)$ with either of its two Dlab orders.  There is an order automorphism $\alpha$ of $G$ such that, for every rank-one subgroup $H\leq\mathbb R_{>0}^{\times}$, every one of the six corresponding Dlab groups $A$ listed below, equipped with any of its linear orders, every order-preserving embedding $e:G\hookrightarrow A$, and every $u\in A$, there exists $f\in G$ for which
\[
e(\alpha(f))\neq u^{-1}e(f)u.
\]
Here $A$ may be any of the following six groups
\[
D_H(I),\quad D_{H*}(I),\quad D_{*H}(I),\quad \overline D_H(I),\quad D_H,\quad D_{H*}.
\]
\end{theorem}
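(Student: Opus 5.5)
We construct $\alpha$ as conjugation by the homeomorphism $h$ described in the introduction, $\alpha(f)=h^{-1}fh$. Then we show that no $u$ in any of the six groups can realize it.

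\textbf{Step 1: the automorphism.} Choose points $1>a_1>b_1>a_2>b_2>\cdots\to0$ with $a_n,b_n\in\mathbb Z[1/2]$. On each $[b_n,a_n]$ let $h$ be a nontrivial PL homeomorphism with slopes in $\langle2\rangle$ and dyadic breakpoints, for instance one that agrees with $x\mapsto b_n+2(x-b_n)$ near $b_n$ and with a slope-$\tfrac12$ map near $a_n$. Let $h$ be the identity elsewhere. Then $h$ has finitely many pieces on every compact subset of $(0,1]$, it is the identity near $1$, and its support components $(b_n,a_n)$ descend to $0$. Every $f\in G$ is the identity on some $[0,\varepsilon)$, so $h^{-1}fh$ agrees with $f$ near $0$ and near $1$. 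Away from $0$ only finitely many pieces of $h$ are involved, so $h^{-1}fh$ is again PL with dyadic breakpoints and slopes in $\langle2\rangle$. Hence $\alpha$ maps $G$ into $G$, and the same holds for $h^{-1}$.

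$\alpha$ preserves the Dlab order, which is determined by the germ or slope at the leftmost or rightmost point of support. Since $h$ is increasing, conjugation preserves the position of the leftmost point of support up to $h$ and preserves the sign of the deviation there. In fact it preserves the whole germ at the boundary of the support, in the sense relevant to either order. I will verify this separately for both orders as described in Section~\ref{sec:models}.

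\textbf{Step 2: reduction to the standard action.} Suppose $e(\alpha(f))=u^{-1}e(f)u$ for all $f$. Work in the natural action of $A$ on $I$ or $\overline{\mathbb R}$. The order-preserving property of $e$ has to be related to the order of $A$; all of the linear orders of these groups are of Dlab type, determined by extreme germs. Let $J_0$ be the least action component of $e(G)$, as described in the introduction. For disjoint dyadic subintervals $K<K'$, the subgroups $G_K$ and $G_{K'}$ commute. Their least nontrivial components in $J_0$ are disjoint and ordered like $K<K'$; order-preservation is needed here to rule out reversal. Taking nested intervals then yields an increasing homeomorphism $\phi:(0,1)\to J_0$ with $e(f)|_{J_0}=\phi f\phi^{-1}$.

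Since $u$ conjugates $e(G)$ to itself, $u$ permutes the action components of $e(G)$ preserving their order. So $u$ fixes the least component $J_0$. Then $\phi^{-1}u\phi$ conjugates the standard action by $f\mapsto h^{-1}fh$. Because $G$ acts with dense orbits and has a trivial centralizer in $\Homeo(0,1)$, we get $\phi^{-1}u|_{J_0}\phi=h\cdot c$ with $c$ centralizing $G$, so $c=1$ and $u|_{J_0}=\phi h\phi^{-1}$.

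\textbf{Step 3: contradiction.} The support components of $u|_{J_0}$ are the images under $\phi$ of $(b_n,a_n)$, and these accumulate at the left endpoint of $J_0$ in a strictly descending sequence. But every element of the six groups has a well-ordered set of support components; this follows from local finiteness of breakpoints together with the boundary conditions, since no such element can have infinitely many components accumulating from the right at a point.

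The boundary case needs care. If the left end of $J_0$ is $0$ in $D_{*H}(I)$ or $\overline D_H(I)$, or $-\infty$ in $\overline{\mathbb R}$, we must still rule out accumulation. In the interval groups, finitely many pieces near the left end excludes it. In $D_H$ and $D_{H*}$ the support is bounded below, which excludes it.

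\textbf{Main obstacle.} I expect the main obstacle to be Step 2, specifically constructing $\phi$: showing that the least components are really ordered consistently and that the nested intervals shrink to points, which requires using the order on $A$. I would first try to show that for $f$ supported in $K$, $e(f)$ has its least support component inside $J_0$ determined by $K$, using commuting and the Dlab-order positivity.
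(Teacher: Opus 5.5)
Your outline follows the paper's architecture step for step: conjugation by an $h$ whose support components descend to $0$, the least action component $J_0$ of $e(G)$, a conjugacy $\phi:(0,1)\to J_0$ built from least action components of localized subgroups, the trivial centralizer, and well-ordering of the support components of $u$. Steps 1 and 3 are essentially sound, and your sign-of-deviation argument for order preservation is a legitimate shortcut. Two small corrections there. In Step 3, say explicitly that $\phi(a_n),\phi(b_n)$ are interior points of $J_0$ fixed by $u$, so each $\phi((b_n,a_n))$ is a support component of $u$ itself. Also, $G$ is a Dlab group, not a Thompson-type group: breakpoints need not be dyadic and may accumulate from the left, so the property that rules out accumulation from the right is local right linearity, not local finiteness. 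The gap is Step 2. It is asserted rather than argued, and it contains essentially all of the difficulty. Even the existence of $J_0$ needs an argument. Simplicity of $G$ makes $e(G)$ faithful on every action component, so each component contains a support component of a fixed $e(b)$. Well-ordering of those components then gives a least one, and every $e(f)$ has its leftmost support component there.

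The central unproved claim is that the least components $K(L),K(M)$ of $e(G(L)),e(G(M))$ in $J_0$ are disjoint for $L<M$. Commutation alone cannot give this: two commuting translation groups of $\mathbb R$ share the whole line as their action component. The missing idea is Dlab-specific.
\begin{itemize}
\item If the two components meet, mutual invariance forces $K(L)=K(M)=:K$; let $a$ be its left endpoint.
\item The right-germ slope at $a$ is a homomorphism from the perfect group $e(G(L))$ to the abelian group $H$, hence trivial. So every first breakpoint $\lambda(p)$ lies strictly right of $a$, while $\inf_p\lambda(p)=a$ because $K$ is an action component. If $a=-\infty$, use instead that supports are bounded below. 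The same holds for $e(G(M))$.
\item In $G$ you have $\operatorname{sgn}(fg^n)=\operatorname{sgn}(f)$ for nonidentity $f\in G(L)$ and $g\in G(M)$. With the first-breakpoint sign rule in $A$, this forces $\lambda(p)<\lambda(q)$ for all nonidentity $p,q$. If $\lambda(q)<\lambda(p)$, use $n=\pm1$; if they coincide, choose $n$ with $st^n$ on the other side of $1$ from $s$.
\item Fixing one $p$ then contradicts $\inf_q\lambda(q)=a$.
\end{itemize}
A similar computation with $f>1$ and $g<1$ gives $K(L)<K(M)$.

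That sign rule is what the cited classifications provide for the order restricted to $D_H(I)$ resp.\ $D_{H*}(I)$ when $A=D_{*H}(I)$ or $\overline D_H(I)$. You reach that subgroup because $e(G)$ is perfect. Your blanket claim that every order on these groups is determined by extreme germs is more than is available.

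Finally, nested intervals do not by themselves yield points or a homeomorphism. You need:
\begin{itemize}
\item $K(L)\subset K(M)$ whenever $L\subset M$;
\item equivariance $e(g)K(L)=K(g(L))$;
\item cut points $F_x=[a_x,b_x]$;
\item transitivity of $G$ on $(0,1)$, together with the fact that $J_0$ has no uncountable family of disjoint open intervals, to show the $F_x$ are points and $\phi$ has no jumps;
\item the absence of common fixed points inside $J_0$, to get surjectivity.
\end{itemize}
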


The theorem is restricted to the six rank-one groups listed in \cite{Med}.  Zenkov and Medvedev \cite{ZM} also discuss the reflected extended-real group $D_{*H}$; it is not among these six and is not included in the theorem.  Thus the theorem concerns exactly the six groups displayed above.

Section~\ref{sec:models} recalls the order and normal-subgroup facts needed for the six groups.  Section~\ref{sec:h} constructs the automorphism.  Section~\ref{sec:reconstruct} identifies the action on the least component, and Section~\ref{sec:exclude} proves that no element of an ambient Dlab group induces this automorphism by conjugation.

\section{The six Dlab groups and the least action component}\label{sec:models}
We use the usual order topology on the unit interval or on
\(\overline{\mathbb R}=\{-\infty\}\cup\mathbb R\cup\{+\infty\}\), according to the
group.  Thus every action component considered below is an open interval in a
separable linear continuum; if one of its endpoints is infinite, the other
arguments are understood in the extended order.  For a subgroup
\(K\leq\mathbb R_{>0}^{\times}\), write \(D_K([0,1])\) for Dlab's compact-support
group of increasing locally right \(K\)-linear homeomorphisms of \([0,1]\).
For a nonidentity element \(x\) of a linearly ordered group, write \(\operatorname{sgn}(x)=+\)
if \(x>1\) and \(\operatorname{sgn}(x)=-\) if \(x<1\).

We shall use two facts from the definitions of these Dlab groups.  For a nonidentity element, the \emph{first breakpoint} means the first point of its breakpoint sequence, equivalently the left endpoint of its first support component.  First, the
breakpoint sequence of every element of these groups is well ordered from left to right;
hence the nonempty family of its support components has a least member.  Second,
at every finite fixed point an element is affine on some right neighborhood,
with slope in the prescribed subgroup of \(\mathbb R_{>0}^{\times}\).  These properties follow from the definitions in \cite[Sec.~2]{Dlab} and \cite[Sec.~1--2]{ZM}.

Dlab \cite[Theorems~4.1 and~4.3]{Dlab} proved that \(D_K([0,1])\) is algebraically simple and that its linear
orders correspond to the linear orders of \(K\).
For rank-one \(K\), there are exactly two such orders, and the sign of a
nonidentity element is determined by whether the right slope at its first
breakpoint is greater or less than \(1\) in the chosen order of \(K\).
The analogous two-order statements for the one-sided interval and extended-real groups are proved in \cite[Theorems~1.1 and~2.1]{ZM}.  For the compact-support group $D_H$ on the extended real line, the same first-breakpoint description is given in \cite[Sec.~1]{Med}.

\begin{proposition}\label{prop:interfaces}
Let \(H\leq\mathbb R_{>0}^{\times}\) have rank one, let \(A\) be one of
\[
D_H(I),\quad D_{H*}(I),\quad D_{*H}(I),\quad \overline D_H(I),
\quad D_H,\quad D_{H*},
\]
and let \(P\leq A\) be perfect.  For every linear order on \(A\), there is
a normal subgroup \(N\triangleleft A\) containing \(P\) such that the order
induced on \(N\) determines the sign of every nonidentity element of \(P\) from its
leftmost support component and the right slope at the first breakpoint of that
component.  More precisely:
\begin{enumerate}[label=(\roman*)]
\item for \(A=D_H(I)\), take \(N=D_H(I)\).  This is the compact-support group; by \cite[Theorem~4.1]{Dlab}, for rank-one \(H\)
      its two linear orders are exactly the two first-breakpoint orders;
\item for \(A=D_{H*}(I)\), take \(N=D_{H*}(I)\); the proof of
      \cite[Theorem~1.1]{ZM} shows that its two orders are exactly the two
      first-breakpoint orders;
\item for \(A=D_{*H}(I)\), the normal-structure theorem in \cite[Theorem~4.3]{Dlab} gives
      \(D_{*H}(I)/D_H(I)\cong H\).  Hence perfection gives
      \(P\leq D_H(I)\), and \cite[Theorem~4.1]{Dlab} applies to the
      restricted order;
\item for \(A=\overline D_H(I)\), the normal-structure theorem in \cite[Theorem~4.3]{Dlab} gives
      \(\overline D_H(I)/D_{H*}(I)\cong H\).  Hence
      \(P\leq D_{H*}(I)\), and then \cite[Theorem~1.1]{ZM} applies;
\item for \(A=D_{H*}\), take \(N=D_{H*}\) and use
      \cite[Theorem~2.1]{ZM};
\item for \(A=D_H\), take \(N=D_H\).  This compact-support extended-real group has exactly the two first-breakpoint orders by \cite[Sec.~1]{Med}.
\end{enumerate}
In cases (iii) and (iv), the order on the indicated normal subgroup is simply the restriction of the given order on $A$.  No additional choice of order is made.  In case (vi) no passage to a smaller normal subgroup is needed.
\end{proposition}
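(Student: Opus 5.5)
The plan is to go through the six cases in turn. In every case the work splits into two parts. First, locate a normal subgroup $N$ of $A$ that contains $P$ and on which the cited classification results apply. Second, check that the order $A$ induces on $N$ is one of the first-breakpoint orders. Once $P\leq N$ is known and the order on $N$ is known, the sign of each nonidentity $p\in P$ can be read off. Take the least support component of $p$, which exists because breakpoint sequences are well ordered. By the second definitional fact, $p$ is affine on a right neighbourhood of the left endpoint of that component, with slope in $H$. The sign of $p$ is then determined by whether this slope is $>1$ or $<1$ in the chosen order of $H$.

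\textbf{Cases (i), (ii), (v), (vi).} Here $N=A$, so there is nothing to check about containment. I simply quote the relevant result:
\begin{itemize}[label={}]
\item (i) \cite[Theorem~4.1]{Dlab};
\item (ii) the proof of \cite[Theorem~1.1]{ZM};
\item (v) \cite[Theorem~2.1]{ZM};
\item (vi) \cite[Sec.~1]{Med}.
\end{itemize}
Each of these says that every linear order on the group is one of exactly two orders. These two orders are obtained by pulling back one of the two orders of the rank-one group $H$ along the first-breakpoint slope map. The only thing to verify is that ``first breakpoint'' in those sources agrees with the left endpoint of the leftmost support component. This holds because a breakpoint before the first support component would be a point where the element is not affine on both sides. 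That is impossible where the element is locally the identity.

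\textbf{Cases (iii), (iv).} Here $N\neq A$, and the key step is to force $P\leq N$ using perfection. By the normal-structure theorem \cite[Theorem~4.3]{Dlab}, the quotient $A/N$ is isomorphic to $H$. I would make the quotient map explicit: it records the germ slope at the endpoint where elements of $A$ need not be the identity, namely $1$ for $D_{*H}(I)$ and $0$ for $\overline D_H(I)$. This map is a homomorphism into the abelian group $H$, and its kernel is $N$. Since $P=[P,P]$, the image of $P$ in $H$ is perfect and abelian, hence trivial, so $P\leq N$. The restriction of the given linear order on $A$ to $N$ is a linear order on $N$. Then (iii) reduces to case (i) via \cite[Theorem~4.1]{Dlab}, and (iv) reduces to case (ii) via \cite[Theorem~1.1]{ZM}.

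\textbf{Main obstacle.} The delicate part is not the containment argument but the bookkeeping in (iii) and (iv). I must confirm two things:
\begin{itemize}[label={}]
\item the cited normal-structure statement really identifies $N$ as the kernel of a homomorphism to the abelian group $H$, and not merely that some quotient is isomorphic to $H$ (although any isomorphism to an abelian group suffices for the perfection argument);
\item the orders classified in the references are arbitrary linear (bi-invariant) orders, so that restrictions of orders from $A$ are covered without any extra compatibility hypothesis.
\end{itemize}
I would check these points against \cite{Dlab,ZM} and note them explicitly, as the statement's closing remarks do.
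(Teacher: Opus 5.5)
Your proposal is correct and follows the paper's proof: only cases (iii) and (iv) need an argument, and there the abelian quotient $A/N\cong H$ from Dlab's normal-structure theorem forces the perfect subgroup $P$ into $N$, after which the restricted order falls under the cited first-breakpoint classifications. One small slip does not affect the logic, since, as you note, only commutativity of $A/N$ is used: with the paper's conventions, elements of $D_{*H}(I)$ are the identity near $1$, so in case (iii) the quotient map is the right slope at $0$, exactly as in case (iv), and not a germ slope at $1$ (which is identically $1$ on $D_{*H}(I)$).
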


\begin{proof}
Only (iii) and (iv) require passing to a proper normal subgroup.  The full normal-structure theorem in \cite[Theorem~4.3]{Dlab} gives
\[
 D_{*H}(I)/D_H(I)\cong H,
 \qquad
 \overline D_H(I)/D_{H*}(I)\cong H.
\]
Since \(H\) is abelian, a perfect subgroup has trivial image in either
quotient.  Thus \(P\) lies in the stated kernel.  The remaining assertions
are precisely the cited order classifications.  In particular, replacing the
order of the rank-one slope group by its inverse reverses the two signs but
does not change the first-breakpoint rule.
\end{proof}

For an interval $X$, let $\Homeo^+(X)$ denote the group of increasing homeomorphisms of $X$.  For subsets $E,F$ of a linearly ordered set, write $E<F$ if every point of $E$ precedes every point of $F$.  We shall use the following elementary facts.

\begin{lemma}\label{lem:components}
Let \(X\) be an interval, and let \(B,C\leq\Homeo^+(X)\).
\begin{enumerate}[label=(\roman*)]
\item If \(c\in\Homeo^+(X)\) normalizes \(B\), then \(c\) preserves
      \(\Fix(B)\) and permutes the components of \(X\setminus\Fix(B)\) in
      their natural order.  Consequently, if those components have a least
      member, \(c\) fixes it setwise.
\item If \(B\leq C\), every action component of \(B\) is contained in an
      action component of \(C\).
\item If \(J\) is an action component of \(B\), then a nonempty interval
      \(V\subseteq J\) which is invariant under all of \(B\) must equal \(J\).
\end{enumerate}
\end{lemma}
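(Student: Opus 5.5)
We prove the three parts separately; each is a direct point-set argument using only that elements of \(\Homeo^+(X)\) are increasing bijections of \(X\) onto itself.

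For (i), we first show \(c(\Fix(B))=\Fix(B)\). Let \(x\in\Fix(B)\) and \(b\in B\). Since \(c\) normalizes \(B\), \(c^{-1}bc\in B\), so \(c^{-1}bc(x)=x\), that is, \(b(c(x))=c(x)\). Hence \(c(x)\in\Fix(B)\) and \(c(\Fix(B))\subseteq\Fix(B)\). The same argument with \(c^{-1}\), which also normalizes \(B\), gives equality. Consequently \(c\) maps \(X\setminus\Fix(B)\) homeomorphically onto itself. A homeomorphism carries connected components onto connected components, so \(c\) permutes the action components of \(B\). Because \(c\) is increasing, \(J<J'\) implies \(c(J)<c(J')\), so the induced permutation preserves the natural order on components. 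An order-preserving bijection of a linearly ordered set sends its least element to its least element. Thus, if the components have a least member \(J_0\), then \(c(J_0)=J_0\).

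For (ii), suppose \(B\leq C\). Then \(\Fix(C)\subseteq\Fix(B)\), so \(X\setminus\Fix(B)\subseteq X\setminus\Fix(C)\). Each action component of \(B\) is a connected subset of \(X\setminus\Fix(C)\). It therefore lies in a single component of \(X\setminus\Fix(C)\), which is an action component of \(C\).

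For (iii), let \(V\subseteq J\) be a nonempty \(B\)-invariant interval, and suppose \(V\neq J\). Since \(V\) is convex and \(J\) is an interval, some point \(p\in J\setminus V\) lies either above all of \(V\) or below all of \(V\); by symmetry we treat the case where it lies above. Put \(s=\sup V\), taken in the completion if necessary. Then \(s\le p\), so \(s\) is a point of \(J\) other than its right endpoint.
\begin{itemize}
\item Every \(b\in B\) satisfies \(b(V)=V\), since \(V\) is invariant under both \(b\) and \(b^{-1}\).
\item An increasing homeomorphism preserves suprema, so \(b(s)=\sup b(V)=s\).
\item Hence \(s\in\Fix(B)\). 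But \(s\in J\subseteq X\setminus\Fix(B)\), a contradiction.
\end{itemize}
The lower case is handled the same way using \(\inf V\). Therefore \(V=J\).

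The only point requiring care is in (iii): the supremum \(s\) must be an actual point of \(X\) lying in \(J\), rather than an endpoint of \(J\). This holds because \(s\) lies between a point of \(V\) and the point \(p\), both of which are in \(J\), and \(J\) is convex. The separable linear continuum setting of Section~\ref{sec:models} ensures these suprema exist.
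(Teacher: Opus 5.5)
Your proof is correct and follows the paper's argument essentially step for step. For (i) you use the normalizer computation $b(c(x))=c(x)$, applied to both $c$ and $c^{-1}$; for (ii) the inclusion $\Fix(C)\subseteq\Fix(B)$; and for (iii) the observation that an endpoint of $V$ lying inside $J$ would be a common fixed point of $B$. Your use of $\sup V$ (or $\inf V$) simply makes explicit the paper's phrase ``at least one endpoint of $V$ lies in the interior of $J$.''
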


\begin{proof}
For (i), if \(x\in\Fix(B)\) and \(b\in B\), then
\(b(c x)=c(c^{-1}bc)x=cx\); applying the same argument to \(c^{-1}\) gives
\(c\Fix(B)=\Fix(B)\).  An increasing homeomorphism therefore permutes the
complementary components in order, and an order automorphism of a linearly
ordered set fixes its least element.  For (ii),
\(\Fix(C)\subseteq\Fix(B)\), so each component of the complement of
\(\Fix(B)\) lies in one component of the complement of \(\Fix(C)\).  For
(iii), if \(V\ne J\), at least one endpoint of \(V\) lies in the interior of
\(J\).  Every increasing homeomorphism preserving \(V\) fixes that endpoint,
contrary to the definition of \(J\).
\end{proof}

\begin{proposition}\label{prop:J0}
Let \(G=D_{\langle2\rangle}([0,1])\), let \(H\leq\mathbb R_{>0}^{\times}\)
have rank one, and let \(e:G\hookrightarrow A\) be an order-preserving
embedding into one of the six groups above.  Then the nontrivial action
components of \(e(G)\) have a least member \(J_0\).  The restriction of \(e(G)\)
to every nontrivial action component is faithful, and for each \(1\ne f\in G\)
the leftmost support component of \(e(f)\) lies in \(J_0\); its first right
slope gives exactly the sign of \(f\).
\end{proposition}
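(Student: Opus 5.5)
We first put ourselves in the setting of Proposition~\ref{prop:interfaces}. The group $G$ is simple by \cite[Theorem~4.1]{Dlab}, hence perfect. So $P=e(G)$ is perfect, and there is a normal subgroup $N\triangleleft A$ containing $e(G)$ whose restricted order is a first-breakpoint order. In particular, for every $1\ne f\in G$, the sign of $e(f)$ in $A$, which equals the sign of $f$ because $e$ is order preserving, is read off from the right slope at the first breakpoint of the leftmost support component of $e(f)$.

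To show that a least nontrivial action component exists, we fix one nonidentity $f_1\in G$. By the first fact recalled in Section~\ref{sec:models}, the support components of $e(f_1)$ are well ordered, so $e(f_1)$ has a leftmost component $C_1$. By Lemma~\ref{lem:components}(ii) applied to $\langle e(f_1)\rangle\le e(G)$, $C_1$ lies in some action component $J_1$ of $e(G)$. We must rule out an infinite descending chain of nontrivial action components to the left of $J_1$.

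The main obstacle is exactly this step, and I would handle it by contradiction using the ordering. Suppose the components to the left of a given one have no least element. Then for each nontrivial component $J$ we can find $g\in G$ with $e(g)$ nontrivial on some component $J'<J$. Fix $f$ with $e(f)$ first moving in $J$, with positive first slope. Choose $g$ moving first in $J'<J$, and replace $g$ by $g^{\pm1}$ so that $e(g)$ is negative there. Now form commutators or conjugates of $g$ by elements supported in $(0,1)$ that shrink supports. Simplicity of $G$ lets us express $g$ in terms of conjugates of $f$ and $f^{-1}$. But every conjugate $e(k)^{-1}e(f)e(k)$ is trivial on $J'$, because $e(f)$ acts trivially on every component strictly left of $J$ and components are permuted by $e(G)$ while preserving $J$ itself. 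Here I am using Lemma~\ref{lem:components}(i): $e(G)$ fixes each of its action components setwise. Hence a product of such conjugates is trivial on $J'$, contradicting the fact that $g$ is nontrivial there. More cleanly, the normal closure in $G$ of $f$ is all of $G$, yet it consists of elements acting trivially left of the first component where $e(f)$ moves. So every $e(g)$ is trivial to the left of $J_1$, and $J_1=J_0$ is the least nontrivial component.

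The same normal-closure argument gives faithfulness. Let $J$ be any nontrivial action component, and consider the kernel of the restriction map $G\to\Homeo^+(J)$. It is a normal subgroup of the simple group $G$. It is not all of $G$, because $J$ is nontrivial. So the kernel is trivial.

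Finally, take $1\ne f\in G$. Since restriction to $J_0$ is faithful, $e(f)$ moves somewhere in $J_0$. By the argument above, $e(f)$ is trivial left of $J_0$. Hence its leftmost support component lies in $J_0$: support components of $e(f)$ cannot straddle fixed points of $e(G)$, and the support components inside $J_0$ are well ordered. The sign statement then follows from the first-breakpoint rule of Proposition~\ref{prop:interfaces}, applied within $N$.
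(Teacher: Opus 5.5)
Your proof is correct and uses the same two ingredients as the paper's: simplicity of \(G\), and the existence of a leftmost support component of a single element \(e(f_1)\). Your normal-closure argument is exactly faithfulness of the restriction to a putative action component \(J'<J_1\), on which \(e(f_1)\) is trivial. The only difference is organizational: you identify \(J_0\) directly as the action component containing that leftmost support component, whereas the paper maps each action component to the first support component of \(e(b)\) inside it and concludes that all action components are well ordered. Two minor points: the sign-and-commutator digression in your third paragraph is unnecessary, and the setwise invariance of each action component follows simply because \(e(G)\) fixes \(\Fix(e(G))\) pointwise, not from Lemma~\ref{lem:components}(i).
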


\begin{proof}
The group \(G\) is nonabelian simple, hence perfect.  Proposition~\ref{prop:interfaces}
places \(e(G)\), when necessary, in a normal kernel on which the ambient order
has the first-breakpoint description.

Let \(F=\Fix(e(G))\), and let \(J\) be a component of the complement of \(F\).
The kernel of the restriction homomorphism \(G\to\Homeo^+(J)\) is normal in
\(G\).  The action on \(J\) is nontrivial by definition, so simplicity makes
this kernel trivial.  Thus every nonidentity element of \(G\) moves a point in
every action component.

Fix \(1\ne b\in G\).  Each action component \(J\) therefore contains a support
component of \(e(b)\): if \(x\in J\) is moved by \(e(b)\), the support component
of \(e(b)\) containing \(x\) cannot cross an endpoint of \(J\), since such an
endpoint belongs to \(F\).  Choose, in each \(J\), the first support component
of \(e(b)\) lying in \(J\).  If \(J_1<J_2\), the chosen component in \(J_1\)
precedes the one in \(J_2\).  Since the support components of the
element \(e(b)\) are well ordered, the action components themselves have a
least member \(J_0\).

Now let \(1\ne f\in G\).  Faithfulness on \(J_0\) implies that \(e(f)\) has
support in \(J_0\).  No support can occur to the left of \(J_0\): outside the
action components every point is fixed by \(e(G)\), and by minimality there is
no earlier action component.  Hence the global leftmost support component of
\(e(f)\) lies in \(J_0\).  By Proposition~\ref{prop:interfaces}, the ambient
sign is determined by the first right slope there, and order preservation identifies it
with the sign of \(f\) in \(G\).
\end{proof}

\begin{remark}
The rank-one hypothesis on the ambient slope group is essential for the order
classification used above.  No assertion for arbitrary higher-rank slope
groups is made here.
\end{remark}

\section{The alternating-slope automorphism}\label{sec:h}
Set
\[
x_n=2^{-(n+1)}\qquad(n\geq0).
\]
Define $h:[0,1]\to[0,1]$ to be the identity on $[1/2,1]$.  For each $m\geq0$, prescribe slope $2$ on $[x_{2m+2},x_{2m+1}]$ and slope $1/2$ on $[x_{2m+1},x_{2m}]$, joining the pieces continuously from the right.

\begin{lemma}\label{lem:h}
The map $h$ is an increasing homeomorphism of $[0,1]$, it fixes every $x_{2m}$, and its support components are
\[
C_m=(x_{2m+2},x_{2m})\qquad(m\geq0).
\]
In particular $C_{m+1}<C_m$, so the family of support components of $h$ has no least member.
\end{lemma}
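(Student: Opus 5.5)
The plan is to compute $h$ explicitly, working leftward from the fixed point $x_0=1/2$ as in the phrase ``joining the pieces continuously from the right'', and to show by induction on $m$ that $h(x_{2m})=x_{2m}$. Once that holds, $h$ maps each block $[x_{2m+2},x_{2m}]$ to itself. These blocks tile $(0,1/2]$, and $h$ is the identity on $[1/2,1]$.

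\emph{Step 1: the fixed points.} Assume $h(x_{2m})=x_{2m}$, where $x_{2m}=4\cdot 2^{-(2m+3)}$; the case $m=0$ holds because $h$ is the identity on $[1/2,1]$. On $[x_{2m+1},x_{2m}]$ the length is $2^{-(2m+2)}$ and the slope is $1/2$, so $h$ drops by $2^{-(2m+3)}$. This gives
\[
h(x_{2m+1})=3\cdot 2^{-(2m+3)} .
\]
On $[x_{2m+2},x_{2m+1}]$ the length is $2^{-(2m+3)}$ and the slope is $2$, so $h$ drops by $2^{-(2m+2)}$. This gives
\[
h(x_{2m+2})=2^{-(2m+3)}=x_{2m+2},
\]
which closes the induction. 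Hence $h$ fixes every $x_{2m}$, and $h$ restricts to an increasing piecewise affine bijection of each block $[x_{2m+2},x_{2m}]$ onto itself.

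\emph{Step 2: homeomorphism.} Set $h(0)=0$. Then $h$ is strictly increasing on $[0,1]$ and is a bijection. It is continuous at $0$ because $h(x_{2m})=x_{2m}\to0$ and $h$ is monotone. An increasing bijection of $[0,1]$ is a homeomorphism.

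\emph{Step 3: the support.} On each block, the function $h(x)-x$ is continuous and piecewise affine. It vanishes at $x_{2m+2}$, increases with slope $1$ on $[x_{2m+2},x_{2m+1}]$, and decreases with slope $-1/2$ on $[x_{2m+1},x_{2m}]$, vanishing again at $x_{2m}$. Its maximum value there is $h(x_{2m+1})-x_{2m+1}=2^{-(2m+3)}>0$. Therefore $h(x)>x$ exactly on the open interval $(x_{2m+2},x_{2m})$.

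Together with $h=\mathrm{id}$ on $\{0\}\cup[1/2,1]$, the support components of $h$ are exactly the intervals $C_m=(x_{2m+2},x_{2m})$. Since $x_{2m+4}<x_{2m+2}$, we have $C_{m+1}<C_m$, so the family $\{C_m\}$ is strictly descending toward $0$ and has no least member. I expect no real obstacle; the only point needing care is the arithmetic in Step 1, which makes the two slope pieces close up exactly at $x_{2m+2}$.
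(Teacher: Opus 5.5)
Your proof is correct and follows essentially the same route as the paper. Both arguments use right-to-left induction to show that each block $[x_{2m+2},x_{2m}]$ is mapped onto itself (you compute $h(x_{2m+1})$ and $h(x_{2m+2})$ explicitly, where the paper checks that the image length equals the block length), and then analyze the sign of the piecewise affine function $h(x)-x$, which has slopes $1$ and $-1/2$ on each block; your explicit handling of $h(0)=0$ and of continuity at $0$ only spells out what the paper leaves implicit.
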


\begin{proof}
On the block $[x_{2m+2},x_{2m}]$, the prescribed image length is
\[
2(x_{2m+1}-x_{2m+2})+\frac12(x_{2m}-x_{2m+1})
=x_{2m}-x_{2m+2},
\]
using $x_{n+1}=x_n/2$.  Since $h(x_0)=x_0$, induction from right to left gives $h(x_{2m})=x_{2m}$ for all $m$.  All slopes are positive, and the fixed points $x_{2m}$ tend to $0$, so $h$ is an increasing homeomorphism fixing both endpoints.

On the lower half of each block the derivative of $h(x)-x$ is $1$, while on the upper half it is $-1/2$.  The difference vanishes at the two block endpoints and is positive in between.  Hence the support components are exactly the $C_m$.
\end{proof}

\begin{proposition}\label{prop:alpha}
The formula
\[
\alpha_h(f)=h^{-1}fh
\]
defines an order automorphism of \(G\).
\end{proposition}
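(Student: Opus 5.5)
We have to show three things: conjugation by \(h\) maps \(G\) into \(G\), the map is bijective, and it preserves the chosen Dlab order. Since \(h\) is an increasing homeomorphism of \([0,1]\), \(f\mapsto h^{-1}fh\) is an injective homomorphism of \(\Homeo^+(I)\) with inverse \(f\mapsto hfh^{-1}\). So it suffices to check that both \(h^{-1}Gh\subseteq G\) and \(hGh^{-1}\subseteq G\). The two checks are symmetric, because \(h^{-1}\) has the same structure as \(h\): it is identity on \([1/2,1]\), piecewise linear with slopes \(2^{\pm1}\), and has finitely many breakpoints on each \([\varepsilon,1]\).

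\textbf{Normalization.} Take \(f\in G\). By definition \(f\) has compact support \(\supp f\subseteq[a,b]\) with \(0<a<b<1\). It is finitely piecewise linear with slopes in \(\langle2\rangle\), and its breakpoints lie in \(\mathbb Z[1/2]\). If the paper's definition of \(D_{\langle2\rangle}\) allows arbitrary breakpoints, this last requirement is simply dropped.

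Then \(h^{-1}fh\) is the identity off \(h^{-1}([a,b])\), which is a compact subinterval of \((0,1)\) because \(h\) fixes \(0\) and \(1\). On the compact interval \(h^{-1}([a,b])\subseteq[\varepsilon,1]\), the maps \(h\), \(f\) and \(h^{-1}\) each have finitely many affine pieces. Hence the composite is finitely piecewise linear there, and by the chain rule its slopes are products of elements of \(\langle2\rangle\).

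If dyadic breakpoints are required, note two facts. Each affine piece of \(h\) is \(x\mapsto 2^{\pm1}x+c\) with dyadic \(c\), since the \(x_n\) are dyadic and each \(h(x_{n})\) is dyadic by induction. Consequently \(h\) and \(h^{-1}\) map dyadic rationals to dyadic rationals. The breakpoints of the composite lie among \(h^{-1}\)-images of the breakpoints of \(f\), the breakpoints of \(h\), and the \(h^{-1}\)-preimages of the breakpoints of \(h\) under \(f\), all of which are dyadic. The same argument applies with \(h\) and \(h^{-1}\) exchanged, so \(\alpha_h\) is an automorphism of \(G\).

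\textbf{Order preservation.} By the first-breakpoint description recalled in Section~\ref{sec:models} (Dlab's classification for \(D_K([0,1])\) with rank-one \(K\)), the sign of \(1\ne f\in G\) is determined by the right slope \(s\) of \(f\) at its first breakpoint \(p\), the left endpoint of \(\supp f\). Put \(g=h^{-1}fh\). Its first breakpoint is \(q=h^{-1}(p)\), since \(\supp g=h^{-1}(\supp f)\) and \(h^{-1}\) is increasing. For \(x\) slightly to the right of \(q\), \(h\) is affine on \([q,q+\delta)\) with right slope \(\lambda\). On a neighborhood of \(h(q)=p\) (on the right), \(f\) has slope \(s\) and maps \([p,p+\eta)\) into a right neighborhood of \(p\). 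On that right neighborhood \(h^{-1}\) has right slope \(\lambda^{-1}\), because \(h^{-1}\) is affine on the image of \([q,q+\delta)\). Thus the right slope of \(g\) at \(q\) is \(\lambda^{-1}s\lambda=s\). The first-breakpoint slope is therefore unchanged, so \(\operatorname{sgn}(g)=\operatorname{sgn}(f)\) for either of the two orders, and \(\alpha_h\) is order-preserving.

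\textbf{Main obstacle.} The delicate point is the local finiteness near \(0\), where \(h\) has infinitely many pieces; it is handled by the compact support of elements of \(G\) away from \(0\). A second, minor point is the dyadic-breakpoint bookkeeping, if the definition requires it. The slope computation works because, at a point, slopes of right germs multiply in the abelian group \(\langle2\rangle\), so conjugation cannot change the first slope.
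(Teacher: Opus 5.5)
Your route is the same as the paper's. Closure of $G$ under conjugation by $h^{\pm1}$ comes from the compact support of $f$ together with the fact that $h$ and $h^{-1}$ have finitely many affine pieces, with slopes in $\langle2\rangle$, on compact subintervals of $(0,1]$. Order preservation comes from the right-germ computation $\lambda^{-1}s\lambda=s$ at the transported first breakpoint $h^{-1}(p)>0$. The order-preservation half is correct as written.

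\textbf{The gap is in the normalization step, which rests on a wrong description of $G$.} In the paper, $G=D_{\langle2\rangle}(I)$ is Dlab's compact-support group of \emph{locally right} $\langle2\rangle$-linear homeomorphisms. Two consequences:
\begin{itemize}
\item Breakpoints may be arbitrary reals; the paper's own transitivity bumps have breakpoints $2x-y$ and $2y-x$.
\item An element may have infinitely many breakpoints, forming a well-ordered set that accumulates from the left. For instance, take a sequence of two-slope bumps shrinking to a point from the left.
\end{itemize}
Finitely many dyadic breakpoints is the description of Thompson's group $F$, not of a Dlab group. So your premise that $f$ has finitely many affine pieces on $h^{-1}([a,b])$ fails in general. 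Likewise, ``finitely piecewise linear'' is not the property that has to be checked.

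\textbf{The fix is a local repair, not a change of strategy:} argue pointwise, exactly as in your order computation. Take $x\in h^{-1}([a,b])\subset(0,1)$. Then $h$ is affine on a right neighborhood of $x$, $f$ on a right neighborhood of $h(x)$, and $h^{-1}$ on a right neighborhood of $fh(x)\in[a,b]$. Hence $h^{-1}fh$ is affine with slope in $\langle2\rangle$ on a right neighborhood of $x$.

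Its breakpoints lie in $h^{-1}$ of the well-ordered breakpoint set of $f$, together with finitely many further points: the breakpoints of $h$ in $h^{-1}([a,b])$, and the $x$ with $fh(x)$ a breakpoint of $h^{-1}$. So they are again well ordered. The dyadic bookkeeping should simply be deleted.
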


\begin{proof}
Every \(f\in G\) is the identity on neighborhoods of \(0\) and \(1\).  Its
support is therefore contained in a compact subinterval of \((0,1)\), on which
\(h\) and \(h^{-1}\) have only finitely many affine pieces, all with slopes in
\(\langle2\rangle\).  Hence \(h^{-1}fh\in G\), and the same argument with
\(h^{-1}\) gives \(h^{-1}Gh=G\).

Conjugation by an increasing homeomorphism carries support components exactly
to their inverse images:
\[
 \supp(h^{-1}fh)=h^{-1}(\supp(f)).
\]
Thus it preserves their left-to-right order and cannot create a new support
component to the left of the old first one.  Let \(a\) be the left endpoint of
a support component of \(f\), let \(a'=h^{-1}(a)\), and let \(k\) be the right
slope of \(f\) at \(a\).  Since \(a'>0\), there are \(c\in\langle2\rangle\) and
\(\varepsilon>0\) such that
\[
 h(a'+t)=a+ct\qquad(0\le t<\varepsilon).
\]
After decreasing \(\varepsilon\), the right germ of \(f\) at \(a\) is
\(f(a+s)=a+ks\), and the corresponding affine piece of \(h^{-1}\) has slope
\(c^{-1}\).  Therefore
\[
 (h^{-1}fh)(a'+t)=a'+kt
\]
for all sufficiently small \(t>0\).  The first breakpoint and its right slope
are consequently transported to \(a'\) with the slope unchanged.  Since both
Dlab orders on \(G\) are determined by that first slope, \(\alpha_h\) preserves
the chosen order.
\end{proof}

\section{The action on the least component}\label{sec:reconstruct}
Fix an order-preserving embedding \(e:G\hookrightarrow A\) and the least action
component \(J_0\) from Proposition~\ref{prop:J0}.  For a relatively compact open interval \(L\) in \((0,1)\), written \(L\Subset(0,1)\), set
\[
G(L)=\{g\in G:\supp(g)\Subset L\}.
\]
After an affine change of coordinates, \(G(L)\) is a compact-support Dlab group
with slope group \(\langle2\rangle\); in particular it is nonabelian simple and
perfect.

The restriction of \(e(G(L))\) to any of its nontrivial action components is
faithful: the restriction kernel is normal in the simple group \(G(L)\), and the
action on such a component is nontrivial.
Moreover, \(e(G(L))\) acts nontrivially somewhere in \(J_0\): choose
\(1\ne f\in G(L)\) and use faithfulness of \(e(G)\) on \(J_0\).  Hence
\(e(G(L))\) has at least one action component in \(J_0\).  Fixing such an
\(f\), faithfulness on every one of these components implies that each contains
a support component of \(e(f)\).  The well ordering of the support components
of \(e(f)\) therefore gives a least \(e(G(L))\)-action component in \(J_0\),
which we denote by \(K(L)\).

\begin{lemma}\label{lem:localbreak}
Let \(L\Subset(0,1)\), put \(P=e(G(L))\), and let \(a\) be the left endpoint of
\(K(L)\) in the ambient ordered interval.  For \(1\ne p\in P\), let \(\lambda(p)\)
be the left endpoint of the first support component of \(p\) in \(K(L)\).
Then \(\lambda(p)\) is the global first breakpoint of \(p\), and
\begin{equation}\label{eq:infimum-one}
 \lambda(p)>a\quad(1\ne p\in P),
 \qquad
 \inf_{1\ne p\in P}\lambda(p)=a.
\end{equation}
\end{lemma}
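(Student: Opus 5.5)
We prove the three assertions in turn: that \(\lambda(p)\) is the global first breakpoint, the strict inequality, and the infimum statement.

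\emph{\(\lambda(p)\) is the global first breakpoint of \(p\).} Since \(P\leq e(G)\), Lemma~\ref{lem:components}(ii) shows that \(\Fix(e(G))\subseteq\Fix(P)\). Every action component of \(P\) therefore lies inside an action component of \(e(G)\). Points outside the action components of \(e(G)\) are fixed by \(p\), and Proposition~\ref{prop:J0} places \(J_0\) before every other \(e(G)\)-component. So no support of \(p\) can occur to the left of \(J_0\). Inside \(J_0\), every support point of \(p\) lies in some \(P\)-action component. By definition, \(K(L)\) is the least \(P\)-component contained in \(J_0\), so every support component of \(p\) lying in \(J_0\) but outside \(K(L)\) lies to its right. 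Restriction to \(K(L)\) is faithful, so \(p\) does move points of \(K(L)\). Hence the first support component of \(p\) in \(K(L)\) is its global first support component, and \(\lambda(p)\) is its global first breakpoint.

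\emph{Strict inequality.} A support component of \(p\) inside \(K(L)\) is an open subinterval of \(K(L)\), so its left endpoint satisfies \(\lambda(p)\geq a\). We must rule out \(\lambda(p)=a\), and here we use the germ property recalled in Section~\ref{sec:models}. Suppose \(\lambda(p)=a\). Then \(a\) is finite, since it is the left endpoint of a support component of an element of \(A\) and the breakpoint sequence consists of genuine points. The point \(a\) lies in \(\Fix(P)\), so each \(q\in P\) is affine on a right neighbourhood of \(a\) with slope \(s(q)\in H\). The map \(q\mapsto s(q)\) is a homomorphism \(P\to H\), because right germs at a common fixed point compose. Since \(H\) is abelian and \(P\cong G(L)\) is perfect, \(s\) is trivial. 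Thus every \(q\in P\) is the identity on some right neighbourhood \([a,a+\varepsilon_q)\). In particular \(p\) fixes a right neighbourhood of \(a\), which contradicts the assumption that a support component of \(p\) begins at \(a\). Hence \(\lambda(p)>a\).

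\emph{The infimum equals \(a\).} Suppose instead that \(b=\inf_{p\neq1}\lambda(p)>a\). Then every \(p\in P\) fixes the interval \((a,b)\cap K(L)\) pointwise. Indeed, any support of \(p\) in \(K(L)\) begins at or after \(\lambda(p)\geq b\), and there is no support of \(p\) in \((a,b)\) outside the first component, since that component is the first. So \((a,b)\subseteq\Fix(P)\). But \((a,b)\) is a nonempty subset of the action component \(K(L)\), which is disjoint from \(\Fix(P)\). This is a contradiction. (Equivalently, the set \(V=\{x\in K(L): x\geq b\}\) would be \(P\)-invariant, contradicting Lemma~\ref{lem:components}(iii).) Therefore \(\inf_{p\neq1}\lambda(p)=a\).

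The main obstacle is the strict inequality. We need to know that the left endpoint \(a\) is a genuine finite point, so that the germ property applies, and that the germ homomorphism to the abelian group \(H\) kills the perfect group \(P\). When \(a=-\infty\) in the extended-real cases, \(\lambda(p)>a\) is automatic because \(\lambda(p)\) is finite.
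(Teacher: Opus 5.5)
Your proof is correct and follows essentially the same route as the paper. Minimality of $J_0$ and $K(L)$ makes $\lambda(p)$ the global first breakpoint, perfectness of $P$ kills the right-germ slope homomorphism $P\to H$ at a finite $a$, and a gap $(a,b)$ below the infimum would consist of common fixed points inside the action component $K(L)$; the only cosmetic difference is that you fold $a=-\infty$ into the contradiction (equality $\lambda(p)=a$ forces $a$ finite, precisely because elements of $D_H$ and $D_{H*}$ have support bounded below) rather than treating it as a separate case.
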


\begin{proof}
There is no \(e(G)\)-action component to the left of \(J_0\), and points outside
the action components are fixed by \(e(G)\).  Inside \(J_0\), there is no
\(P\)-action component to the left of \(K(L)\).  Hence every point preceding
\(K(L)\) is fixed by \(P\), so the first support component of a nonidentity
\(p\in P\) occurring in \(K(L)\) is its global first support component.

If \(a\) is finite, every element of \(P\) fixes \(a\).  The right germ at \(a\), meaning the restriction to a sufficiently small right
neighborhood of \(a\), is affine.  Taking its slope gives a homomorphism
\[
 \gamma_a:P\longrightarrow H,
\]
where \(\gamma_a(p)\) is the slope of the right germ at \(a\).  Since \(P\) is
perfect and \(H\) is abelian, \(\gamma_a(P)=1\).  A linear germ of slope \(1\)
through the fixed point \(a\) is the identity germ, so each \(p\) fixes a right
neighborhood of \(a\), proving \(\lambda(p)>a\).  If \(a=-\infty\), then the ambient interval is \(\overline{\mathbb R}\).
Only the extended-real groups can occur.  Proposition~\ref{prop:interfaces}
places \(P\) in \(D_H\) or \(D_{H*}\).  By the defining condition in
Zenkov--Medvedev, every element of \(D_{H*}\) fixes an initial interval
\(( -\infty,c]\) for some finite \(c\), and \(D_H\le D_{H*}\); see
\cite[Sec.~2]{ZM}.  Thus every nonidentity \(p\in P\) has a finite first
breakpoint, so \(\lambda(p)>-\infty=a\).

Finally, if the infimum in \eqref{eq:infimum-one} were \(c>a\), then every
element of \(P\) would fix the nonempty initial interval \((a,c)\) of \(K(L)\),
contrary to the definition of an action component.  Thus the infimum is \(a\).
\end{proof}

\begin{lemma}\label{lem:sep}
If \(L,M\Subset(0,1)\) and \(L<M\), then \(K(L)\) and \(K(M)\) are disjoint.
\end{lemma}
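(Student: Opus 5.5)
We argue by contradiction, using that $G(L)$ and $G(M)$ commute, and derive that one of the two groups fixes an interior point of a common action component. Put $P=e(G(L))$ and $Q=e(G(M))$. Since $L<M$ and supports of elements of $G(L)$, $G(M)$ are compactly contained in $L$, $M$ respectively, every element of $G(L)$ has support disjoint from every element of $G(M)$. Hence $G(L)$ and $G(M)$ commute elementwise in $G$, and so $P$ and $Q$ commute in $A$.

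\emph{Step 1: each group preserves the other's least component.} Every $q\in Q$ lies in $e(G)$, so it fixes $\Fix(e(G))$ pointwise and therefore preserves $J_0$. Since $q$ commutes with $P$, it normalizes $P$. By Lemma~\ref{lem:components}(i), $q$ permutes the $P$-action components in their natural order. Because it preserves $J_0$, it permutes those lying in $J_0$ in order, so it fixes the least one, $K(L)$, setwise. Symmetrically, every element of $P$ preserves $K(M)$.

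\emph{Step 2: if the components meet, they coincide.} Suppose $K(L)\cap K(M)\neq\emptyset$. The intersection $V$ is a nonempty interval contained in $K(L)$. It is $P$-invariant, because $P$ preserves $K(L)$ (an action component) and $K(M)$ (by Step 1). Lemma~\ref{lem:components}(iii) then gives $V=K(L)$, so $K(L)\subseteq K(M)$. The symmetric argument gives $K(M)\subseteq K(L)$. Thus $K(L)=K(M)=:K$, with common left endpoint $a$.

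\emph{Step 3: contradiction.} Fix $1\neq q\in Q$. By Lemma~\ref{lem:localbreak} applied to $M$, $q$ fixes the interval $(a,\lambda(q))$ pointwise, and $\lambda(q)\in K$ is moved arbitrarily close to its right. Let $W$ be the component of the interior of $\Fix(q)\cap K$ whose closure contains $a$. Then $W=(a,\lambda(q))$: its right endpoint is exactly $\lambda(q)$, because $q$ moves points immediately to the right of $\lambda(q)$.

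Every $p\in P$ commutes with $q$, so $p$ preserves $\Fix(q)$. It also preserves $K$, and it is an increasing homeomorphism fixing $a$ (or preserving the end $-\infty$). Hence $p$ maps $W$ to a component of the interior of $\Fix(q)\cap K$ whose closure contains $a$, namely $W$ itself. Therefore $p(\lambda(q))=\lambda(q)$ for all $p\in P$. But $\lambda(q)$ is an interior point of $K$, and $K$ is a $P$-action component, which is a contradiction. Hence $K(L)\cap K(M)=\emptyset$.

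The main delicate point is Step 3's identification of $W$ and the claim that $p$ must preserve it. This needs care when $a=-\infty$, where one uses that $P$ has no fixed points in $K$ but each element is the identity near $-\infty$ (Lemma~\ref{lem:localbreak}). It also requires checking that the right endpoint of $W$ really is $\lambda(q)$, which follows from $\lambda(q)$ being the left endpoint of a support component of $q$.
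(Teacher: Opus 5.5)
Your proof is correct. Steps 1 and 2 are essentially the paper's, but Step 3 takes a genuinely different and shorter route.

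\textbf{The paper's Step 3.} After reaching $K(L)=K(M)=K$, the paper argues through the order. Since $L<M$, every product $pq^n$ has the sign of $p$. Comparing this with the first-breakpoint sign rule of Proposition~\ref{prop:J0} shows $\lambda(p)<\lambda(q)$ for all nonidentity $p\in P$, $q\in Q$. The case of a common first breakpoint needs a separate slope computation with $st^n$. The paper then contradicts the infimum equality in \eqref{eq:infimum-one} for $Q$.

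\textbf{Your Step 3.} You use only commutation. Since $pqp^{-1}=q$, every $p\in P$ satisfies $p(\supp(q))=\supp(q)$. Hence $p$ fixes the left end $\lambda(q)$ of $\supp(q)$; your component $W$ is just a roundabout way of saying this. Lemma~\ref{lem:localbreak} places $\lambda(q)$ strictly inside $K$, so this contradicts $K$ being a $P$-action component.

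Phrased via $\supp(q)$, the case $a=-\infty$ needs no special care, since $p$ fixes the endpoint $-\infty$ of $\overline{\mathbb R}$.

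\textbf{What each approach buys.} Your version needs only the inequality $\lambda(q)>a$, i.e.\ trivial left germs, which come from perfection. It does not use the infimum half of \eqref{eq:infimum-one}, the order on $A$, or the equal-breakpoint case. So it shows the disjointness is a purely dynamical consequence of commuting subgroups whose elements are trivial near the left end of a shared component. The paper's route reuses the sign machinery it needs anyway in Lemma~\ref{lem:K}(ii). Within this lemma, however, it is strictly more work, and its intermediate inequality $\lambda(p)<\lambda(q)$ lives inside a proof by contradiction, so it supplies nothing later.
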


\begin{proof}
Put \(P=e(G(L))\) and \(Q=e(G(M))\).  These groups commute.  Hence every
element of \(Q\) normalizes \(P\), and Lemma~\ref{lem:components}(i) shows that
it fixes the least \(P\)-component \(K(L)\) setwise.  Similarly, \(P\) fixes
\(K(M)\) setwise.

Suppose the two intervals meet.  Their intersection is a nonempty interval
invariant under both groups.  Lemma~\ref{lem:components}(iii), first for \(P\)
and then for \(Q\), gives
\[
 K(L)=K(M)=:K.
\]
Let \(a\) be its left endpoint.  For \(1\ne p\in P\) and \(1\ne q\in Q\),
Lemma~\ref{lem:localbreak} gives first breakpoints \(\lambda(p),\lambda(q)>a\)
and
\begin{equation}\label{eq:two-infima}
 \inf_{1\ne p\in P}\lambda(p)=a=
 \inf_{1\ne q\in Q}\lambda(q).
\end{equation}

Choose \(p=e(f)\ne1\) in \(P\) and \(q=e(g)\ne1\) in \(Q\).  Since \(L<M\),
the source element \(fg^n\) has the same sign as \(f\) for every
\(n\in\mathbb Z\): for \(n\ne0\), its first support component lies in \(L\),
and \(n=0\) is immediate.  Thus
\begin{equation}\label{eq:signpowers}
 \operatorname{sgn}(pq^n)=\operatorname{sgn}(p)
 \qquad(n\in\mathbb Z).
\end{equation}
We claim that \(\lambda(p)<\lambda(q)\).  If
\(\lambda(q)<\lambda(p)\), then \(p\) is the identity on a right neighborhood
of \(\lambda(q)\), so the first germ of \(pq^n\) there is exactly the first
germ of \(q^n\).  The elements \(q\) and \(q^{-1}\) have opposite signs; taking
\(n=1\) or \(-1\) contradicts \eqref{eq:signpowers}.

It remains to exclude \(\lambda(p)=\lambda(q)=c\).  Let \(s\ne1\) and
\(t\ne1\) be the right slopes of the first germs of \(p\) and \(q\) at \(c\).
On a common sufficiently small right neighborhood of \(c\), both maps are
linear and fix \(c\), so the first germ of \(pq^n\) has slope \(st^n\).  The
set \(\{st^n:n\in\mathbb Z\}\) has members on both sides of \(1\): if \(t>1\),
then \(st^n\to\infty\) as \(n\to+\infty\) and \(st^n\to0\) as
\(n\to-\infty\), and the case \(t<1\) is reversed.  At most one integer can
satisfy \(st^n=1\), so we may choose \(n\) on the side opposite to \(s\) with
\(st^n\ne1\).  For this \(n\) there is no cancellation of the first germ:
\(c\) remains the first breakpoint of \(pq^n\), and Proposition~\ref{prop:J0} then gives \(pq^n\) a sign opposite to that of \(p\).  This again
contradicts \eqref{eq:signpowers}.  Passing to the other rank-one order reverses both slope comparisons, so the same conclusion holds for either Dlab order.
Consequently
\begin{equation}\label{eq:lambda-order}
 \lambda(p)<\lambda(q)
 \qquad(1\ne p\in P,\ 1\ne q\in Q).
\end{equation}
Fixing \(1\ne p_0\in P\), equation \eqref{eq:lambda-order} gives
\(\lambda(q)>\lambda(p_0)>a\) for every \(1\ne q\in Q\), contradicting the
second equality in \eqref{eq:two-infima}.  Hence \(K(L)\) and \(K(M)\) are
disjoint.
\end{proof}

\begin{lemma}\label{lem:K}
For relatively compact open intervals \(L,M\Subset(0,1)\) the following hold.
\begin{enumerate}[label=(\roman*)]
\item If \(L\subset M\), then \(K(L)\subset K(M)\).
\item If \(L<M\), then \(K(L)<K(M)\).
\item For every \(g\in G\),
      \(e(g)K(L)=K(g(L))\).
\end{enumerate}
Moreover,
\[
 \bigcup_{L\Subset(0,1)}K(L)=J_0.
\]
\end{lemma}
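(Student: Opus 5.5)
We prove the four assertions in the order (i), (iii), (ii), and then the union, using only Lemma~\ref{lem:components}, Lemma~\ref{lem:sep} and the construction of \(K(\cdot)\).

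For (i), suppose \(L\subset M\). Then \(G(L)\leq G(M)\). By Lemma~\ref{lem:components}(ii), \(K(L)\) lies in some \(e(G(M))\)-action component \(K'\), and since \(K(L)\subseteq J_0\), \(K'\) meets \(J_0\). An \(e(G(M))\)-component lies inside a single \(e(G)\)-component, again by Lemma~\ref{lem:components}(ii), so \(K'\subseteq J_0\). It remains to show \(K'=K(M)\); suppose instead \(K(M)<K'\).
\begin{itemize}
\item[\(\bullet\)] Choose \(L'\Subset M\) with \(L'\) disjoint from \(L\). One could first shrink \(L\) slightly; but if \(L\) nearly fills \(M\), a cleaner route is to choose \(L'\subset M\) on one side of \(L\) after shrinking.
\item[\(\bullet\)] Concretely, I would first treat the case where \(M\setminus\overline L\) contains an interval \(L'\). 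Then \(K(L')\) and \(K(L)\) are disjoint by Lemma~\ref{lem:sep}.
\item[\(\bullet\)] \(K(L')\) also lies in some \(e(G(M))\)-component.
\end{itemize}
Simpler route: \(e(G(M))\) acts faithfully on \(K(M)\) and nontrivially there, so some \(1\ne f\in G(L)\) has \(e(f)\) moving a point of \(K(M)\). Hence \(e(G(L))\) has an action component inside \(K(M)\) (components of the subgroup inside \(K(M)\) cannot cross its endpoints). Since \(K(M)<K'\), this component precedes \(K(L)\), contradicting the minimality of \(K(L)\). Therefore \(K'=K(M)\), which gives (i) with no case split.

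For (iii), note that \(e(g)\) conjugates \(e(G(L))\) onto \(e(G(g(L)))\), because \(gG(L)g^{-1}=G(g(L))\). Hence \(e(g)\) maps action components of the first group bijectively and order-preservingly onto those of the second. Since \(e(g)\) preserves \(J_0\) by Lemma~\ref{lem:components}(i) applied with \(B=e(G)\), it carries the least component inside \(J_0\) to the least one. This gives \(e(g)K(L)=K(g(L))\).

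For (ii), assume \(L<M\). Lemma~\ref{lem:sep} makes \(K(L)\) and \(K(M)\) disjoint intervals, so it suffices to exclude \(K(M)<K(L)\). The inequality \eqref{eq:lambda-order} in the proof of Lemma~\ref{lem:sep} used only \(L<M\) and the sign rule, not the hypothesis that the components meet. It gives \(\lambda(p)<\lambda(q)\) for all \(1\ne p\in e(G(L))\) and \(1\ne q\in e(G(M))\). If \(K(M)<K(L)\), then every \(\lambda(q)\in K(M)\) would precede every \(\lambda(p)\in K(L)\), a contradiction.

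For the union, the inclusion \(\subseteq\) is immediate. For \(\supseteq\), let \(L_n\) increase to \((0,1)\). By (i) the sets \(U=\bigcup_n K(L_n)\) form an increasing union of intervals, so \(U\) is an open subinterval of \(J_0\). Every \(g\in G\) has support in some \(L_n\), so \(e(g)\) fixes \(K(L_n)\) setwise; the proof of Lemma~\ref{lem:sep} shows an element commuting with \(G(L_n)\) does this, and elements of \(G(L_n)\) do so trivially. Applying (iii) with \(g(L_m)=L_m\) for \(m\ge n\) then gives \(e(g)U=U\). Lemma~\ref{lem:components}(iii) now yields \(U=J_0\).

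The main obstacle I expect is this last step. The issue is to check carefully that \(e(g)\) preserves each \(K(L_m)\) for large \(m\), which requires \(g(L_m)=L_m\). I would arrange this by choosing the exhaustion so that \(\supp g\Subset L_m\), which forces \(g(L_m)=L_m\).
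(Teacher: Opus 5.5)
Your proof is correct. Parts (i) and (iii) follow the paper.
\begin{itemize}
\item Part (i) is the same faithfulness argument, phrased as a clean contradiction. Drop the abandoned bullet list that precedes it.
\item In part (iii) you make explicit that $e(g)J_0=J_0$. This is what justifies reading ``least component'' as ``least component inside $J_0$.''
\end{itemize}

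The genuine difference is in part (ii). The paper gives a fresh sign argument. It takes $f>1$ in $G(L)$ and $g<1$ in $G(M)$. If $K(M)<K(L)$, then $e(G(L))$ fixes $K(M)$ pointwise. So $e(fg)$ has the first germ of $e(g)$ and is negative, although $fg>1$.

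You instead observe that the derivation of \eqref{eq:lambda-order} inside the proof of Lemma~\ref{lem:sep} never uses the contradiction hypothesis $K(L)=K(M)$. It uses only $L<M$, the global-first-breakpoint statement of Lemma~\ref{lem:localbreak}, and the first-slope sign rule. That audit is accurate. Hence $\lambda(p)<\lambda(q)$ holds whenever $L<M$. Since $\lambda(p)\in K(L)$ and $\lambda(q)\in K(M)$, the order $K(M)<K(L)$ is excluded. Your route saves an argument but relies on a claim buried inside another proof; the paper's version is self-contained.

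For the union, the two treatments also differ slightly.
\begin{itemize}
\item You use a fixed exhaustion and get invariance from $g\in G(L_m)$ for large $m$.
\item The paper takes all $L$ at once, shows the union is an interval by directedness, and gets invariance from (iii), because $g$ permutes the relatively compact intervals.
\end{itemize}
These agree, since every $L$ lies in some $L_n$. No $g$-dependent choice of exhaustion is needed, because the closure of $\supp g$ is compact in $(0,1)$. Your remark there about elements commuting with $G(L_n)$ is irrelevant.
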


\begin{proof}
Suppose \(L\subset M\).  Since \(G(L)\leq G(M)\),
Lemma~\ref{lem:components}(ii) says that every \(e(G(L))\)-action component is
contained in an \(e(G(M))\)-action component.  The restriction of
\(e(G(M))\) to \(K(M)\) is faithful.  Hence any fixed nonidentity element of
\(e(G(L))\) acts nontrivially on \(K(M)\).  Thus some point of \(K(M)\) is moved by \(e(G(L))\).  The
\(e(G(L))\)-action component containing that point lies entirely in \(K(M)\),
because the endpoints of \(K(M)\) are fixed by \(e(G(M))\), hence by
\(e(G(L))\).  Denote this component by \(C\).  If an \(e(G(L))\)-component preceded
\(K(M)\), Lemma~\ref{lem:components}(ii) would place it in an
\(e(G(M))\)-component preceding \(K(M)\), impossible.  Therefore \(C\) is the
least \(e(G(L))\)-component, namely \(K(L)\).  This proves (i).

Now let \(L<M\).  Lemma~\ref{lem:sep} makes \(K(L)\) and \(K(M)\) disjoint.
Choose \(1\ne f\in G(L)\) positive and \(1\ne g\in G(M)\) negative.  Then
\(fg>1\), because its first source support component is the one in \(L\).  If
\(K(M)<K(L)\), minimality of \(K(L)\) implies that \(e(G(L))\) fixes
\(K(M)\) pointwise.  On the other hand, faithfulness of \(e(G(M))\) on
\(K(M)\) makes the first support component of \(e(g)\) in \(J_0\) lie in
\(K(M)\).  Hence the first support component of \(e(fg)\) in \(J_0\) is that
of \(e(g)\), so Proposition~\ref{prop:J0} makes \(e(fg)<1\), contradicting
order preservation.  Thus \(K(L)<K(M)\), proving (ii).

For (iii), conjugation gives \(gG(L)g^{-1}=G(g(L))\), and \(g(L)\Subset(0,1)\)
because \(g\) is a homeomorphism of \((0,1)\).  Thus conjugation by \(e(g)\) sends \(e(G(L))\) onto \(e(G(g(L)))\), and \(e(g)\) sends their action components order-preservingly onto one another.  Indeed, if \(J\) is a component of the complement of
\(\Fix(e(G(L)))\), then \(e(g)J\) is a component of the complement of
\(\Fix(e(G(g(L))))\), because conjugation by \(e(g)\) carries the two
fixed sets onto one another.  Since \(e(g)\) is increasing, this correspondence
preserves the order of components and therefore sends the least component to
the least component.  Hence \(e(g)K(L)=K(g(L))\).

Put \(U=\bigcup_L K(L)\).  It is nonempty and open.  If \(y,z\in U\), choose
\(L,M\) with \(y\in K(L)\) and \(z\in K(M)\), and then choose
\(N\Subset(0,1)\) containing \(L\cup M\).  Part (i) gives
\(K(L),K(M)\subset K(N)\), so the interval between \(y\) and \(z\) lies in
\(K(N)\).  Thus \(U\) is an interval.  Part (iii) makes it \(e(G)\)-invariant.
If \(U\ne J_0\), then because both are nonempty open intervals and
\(U\subset J_0\), at least one endpoint of \(U\) is a finite point in the
interior of \(J_0\).  Every increasing homeomorphism preserving \(U\) setwise
fixes that endpoint, producing a common fixed point of \(e(G)\) in \(J_0\), a
contradiction.  Hence \(U=J_0\).
\end{proof}

We shall also use the following elementary transitivity fact.

\begin{lemma}\label{lem:transitive}
The group $G$ acts transitively on $(0,1)$.
\end{lemma}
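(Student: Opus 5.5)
Given $0<x<y<1$, the plan is to build an explicit element $g\in G$ with $g(x)=y$, using only piecewise-linear maps with slopes $2$ and $1/2$. Transitivity follows, since the case $x>y$ is handled by $g^{-1}$.

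First, I will construct a single "bump" element that moves points to the right. For $0<a<b<1$, I look for $g$ equal to the identity outside $[a,b]$, with slope $2$ on $[a,m]$ and slope $1/2$ on $[m,b]$. Continuity at $b$ forces
\[
2(m-a)+\tfrac12(b-m)=b-a,
\]
so $m=a+(b-a)/3$. This breakpoint $m$ need not be dyadic, but the definition of $D_{\langle2\rangle}(I)$ only constrains slopes, not breakpoints; finitely many breakpoints at arbitrary real points are allowed. Hence $g\in G$. On $(a,b)$ we have $g(t)>t$, and the map $t\mapsto g(t)$ increases the point $m$ to $m+(b-a)/3$. More generally, $g$ sends $a+s$ to $a+2s$ for $0\le s\le (b-a)/3$.

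Second, I would choose $a$ and $b$ adapted to $x$ and $y$. Given $x<y$, the easiest option is to iterate bumps: a bump on $[a,b]$ with $a<x$ carries $x$ to a point $\ge x+\min(x-a,\cdot)$. Rather than iterating, I would prefer a direct choice. Take $a=x-\delta$ for small $\delta>0$, so that $x=a+\delta$ lies in the doubling region once $\delta\le (b-a)/3$. Then $x$ maps only to $x+\delta$, which is too small.

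Instead, I would use a compact-support element that is affine of slope $2^k$ near $x$, or simply iterate bumps. Each bump on $[x/2, (1+y)/2]$ pushes interior points right. Because the bump fixes no interior point, the orbit of $x$ under iteration increases to a fixed point, and the only fixed points are the endpoints. Hence $g^n(x)\to(1+y)/2>y$. Then I pick $n$ with $g^{n}(x)\le y<g^{n+1}(x)$ and finish with one further adjustment.

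The finishing step is the main point where care is needed. To land exactly on $y$, I would use the fact that the stabilizer problem is avoided by choosing the bump parameters continuously. Consider the family of bumps $g_b$ on $[a,b]$ with $a<x$ fixed and $b$ varying over $(x,1)$. The value $g_b(x)$ depends continuously on $b$, tends to $x$ as $b\downarrow x$, and grows as $b$ increases. Composing with $g^{n}$ and using the intermediate value theorem in $b$ yields some $b$ with $g_b(g^{n}(x))=y$, provided $g_b(g^n(x))$ can reach beyond $y$. This is ensured by taking $a$ so close to $0$ that the doubling zone covers $g^n(x)$, which gives $g_b(g^n(x))\ge \min(2g^n(x)-a,\ldots)$. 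I expect verifying this range condition to be the only fiddly estimate. It is routine, because the bump's image of a fixed interior point sweeps continuously from that point up to near $b$ as $b\to1$.
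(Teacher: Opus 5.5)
The approach works but has a gap in the finishing step. Your strategy is to iterate one bump until $g^n(x)\le y<g^{n+1}(x)$, then land exactly on $y$ with a second bump tuned by continuity. The finishing step, which you single out as the crux, rests on a false claim.

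For $0<a<t<b<1$ your bump satisfies $g_{a,b}(t)=\min\{2t-a,\,(b+t)/2\}$. So for fixed $a$ and $z=g^n(x)$, the values $g_{a,b}(z)$ with $b\in(z,1)$ fill exactly the interval $\bigl(z,\min\{2z-a,(1+z)/2\}\bigr)$. They do not sweep ``up to near $b$'' as $b\to1$, and they never pass $(1+z)/2$. Pushing $a$ toward $0$ does not help. The doubling zone $[a,(2a+b)/3]$ then tends to $[0,b/3]$, which misses $z$ whenever $z\ge 1/3$. Even inside that zone the image $2z-a$ stays below $2z$. So the condition $y<\min\{2z-a,(1+z)/2\}$ is a real constraint that has to come from your choice of $n$, and the proposal never derives it.

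The repair is short. Use the same left endpoint $a=x/2$ as the iterated bump, so that $g=g_{a,B}$ with $B=(1+y)/2$. If $z<y$ (otherwise you are done), the map $b\mapsto g_{a,b}(z)$ is continuous on $[z,B]$. It equals $z<y$ at $b=z$ and $g^{n+1}(x)>y$ at $b=B$. The intermediate value theorem then gives $b$ with $g_{a,b}(g^n(x))=y$. Any $0<a\le x/2$ also works, since decreasing $a$ only increases $g_{a,b}(z)$.

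With this fix, your argument is a genuinely different route from the paper's. The paper avoids iteration and continuity by placing the breakpoint at the point being moved. Slope $2$ on $[2x-y,x]$ and slope $1/2$ on $[x,2y-x]$ send $x$ exactly to $y$ whenever $y<2x$ and $2y-x<1$. A partition of $[x,y]$ with mesh below $\min\{x/2,(1-y)/2\}$ then chains finitely many such bumps. That construction is fully explicit and algebraic. Yours uses only two bump shapes, but needs a convergence argument for $g^n(x)$ and an intermediate-value step.
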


\begin{proof}
It is enough to move a point a sufficiently small distance to the right, since finitely many such moves and their inverses connect any two points of $(0,1)$.  Let $x<y$ satisfy
\[
y<2x,\qquad 2y-x<1.
\]
Put $a=2x-y$ and $d=2y-x$.  Define an increasing piecewise linear homeomorphism to be the identity off $[a,d]$, to have slope $2$ on $[a,x]$, and slope $1/2$ on $[x,d]$.  The two pieces join continuously and return to the identity at $d$, while
\[
f(x)=a+2(x-a)=y.
\]
Thus $f\in G$ and sends $x$ to $y$.  For arbitrary $0<x<y<1$, choose a partition $x=t_0<t_1<\cdots<t_n=y$ with mesh smaller than $\min\{x/2,(1-y)/2\}$.  Then $t_{i+1}<2t_i$ and $2t_{i+1}-t_i<1$ for every $i$, so the preceding construction sends $t_i$ to $t_{i+1}$.  Composing these finitely many bumps sends $x$ to $y$; leftward motion follows by inversion.
\end{proof}

For an open interval \(I\) in the ambient ordered interval, write
\(\ell_-(I)\) and \(\ell_+(I)\) for its left and right endpoints.  Fix
\(x\in(0,1)\) and define
\begin{equation}\label{eq:cuts}
 a_x=\sup\{\ell_+(K(L)):L\Subset(0,x)\},
 \qquad
 b_x=\inf\{\ell_-(K(R)):R\Subset(x,1)\}.
\end{equation}
Both sets are nonempty.  By Lemma~\ref{lem:K}(ii), every interval $K(L)$ used
in the first set lies strictly to the left of every interval $K(R)$ used in the
second.  Fix \(L_0\Subset(0,x)\) and \(R_0\Subset(x,1)\).  Then
\[
 \ell_+(K(L))\leq \ell_-(K(R_0)),\qquad
 \ell_-(K(R))\geq \ell_+(K(L_0))
\]
for all admissible \(L,R\).  The two displayed bounding points are finite:
\(K(L_0)<K(R_0)\) are disjoint open intervals inside \(J_0\), so their adjacent
endpoints are finite real points of the ambient interval.  Completeness of
\(\mathbb R\) therefore gives finite \(a_x,b_x\in J_0\) with \(a_x\leq b_x\).
Put
\[
 F_x=[a_x,b_x]\subset J_0.
\]

\begin{lemma}\label{lem:cuts}
If \(x<z\), then \(F_x<F_z\).  Moreover
\begin{equation}\label{eq:cut-covariance}
 e(g)F_x=F_{g(x)}\qquad(g\in G).
\end{equation}
Consequently every \(F_x\) is a singleton.
\end{lemma}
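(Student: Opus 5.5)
For $F_x<F_z$, I will pick $y$ with $x<y<z$ and use the defining sup/inf in \eqref{eq:cuts}. Choose $M\Subset(x,y)$. Then $M\Subset(x,1)$, so $b_x\le \ell_-(K(M))$. Also $M\Subset(0,z)$, so $a_z\ge \ell_+(K(M))>\ell_-(K(M))$. Hence $b_x<a_z$, and therefore $F_x<F_z$.

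For \eqref{eq:cut-covariance}, fix $g\in G$. Since $g$ is an increasing homeomorphism of $(0,1)$, the map $L\mapsto g(L)$ is a bijection from the intervals $L\Subset(0,x)$ onto the intervals $L'\Subset(0,g(x))$. The same holds on the right side, with $(x,1)$ and $(g(x),1)$. By Lemma~\ref{lem:K}(iii), $e(g)K(L)=K(g(L))$. Because $e(g)$ is an increasing homeomorphism of the ambient interval, we get $e(g)\ell_\pm(K(L))=\ell_\pm(K(g(L)))$. An increasing homeomorphism also commutes with suprema and infima of bounded sets of finite points. The bounds here are finite, as shown before the lemma, and they lie in $J_0$, which $e(g)$ preserves by Lemma~\ref{lem:components}(i). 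So $e(g)a_x=a_{g(x)}$ and $e(g)b_x=b_{g(x)}$, which gives $e(g)F_x=F_{g(x)}$.

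For the singleton claim, which I expect to be the main step, suppose that some $F_x$ is nondegenerate. By Lemma~\ref{lem:transitive} and \eqref{eq:cut-covariance}, every $F_z=e(g)F_x$ is then a nondegenerate closed interval. By the first part these intervals are pairwise disjoint. They are indexed by the uncountable set $(0,1)$, so their interiors are uncountably many disjoint nonempty open subintervals of $J_0$. Each interior contains a rational point, or a point of a countable dense subset in the separable linear continuum. This is a contradiction, so every $a_x=b_x$. If separability felt delicate in the extended-real setting, I would note that $J_0$ is an open subinterval of $\mathbb R$ or of $(0,1)$, so rationals suffice.
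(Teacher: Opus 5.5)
Your proof is correct and takes essentially the same route as the paper. An interval $M$ lying between $x$ and $z$ separates the cuts, Lemma~\ref{lem:K}(iii) together with the fact that the increasing homeomorphism $e(g)$ preserves suprema and infima gives the covariance, and transitivity (Lemma~\ref{lem:transitive}) combined with separability of $J_0$ rules out a nondegenerate $F_x$. The auxiliary point $y$ and your remarks on finiteness and on the invariance of $J_0$ are harmless extra detail.
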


\begin{proof}
Choose \(M\Subset(0,1)\) with \(x<M<z\).  Since \(M\) occurs on the right of
\(x\) and on the left of \(z\), definition \eqref{eq:cuts} gives
\[
 b_x\leq \ell_-(K(M))<\ell_+(K(M))\leq a_z.
\]
Thus \(F_x<F_z\).

For the second assertion, \(g\) bijects the relatively compact intervals in \((0,x)\)
with those in \((0,g(x))\), and similarly on the right.  By
Lemma~\ref{lem:K}(iii), \(e(g)K(L)=K(g(L))\).  An increasing homeomorphism of an
interval preserves endpoints and existing suprema and infima.  Applying
\(e(g)\) to the two cuts in \eqref{eq:cuts} therefore gives
\(e(g)a_x=a_{g(x)}\) and \(e(g)b_x=b_{g(x)}\), proving
\eqref{eq:cut-covariance}.

The intervals \(F_x\) are pairwise disjoint.  Suppose one, say \(F_x\), is
nondegenerate.  Its interior is then a nonempty open interval.  By
Lemma~\ref{lem:transitive} and \eqref{eq:cut-covariance}, every \(F_z\) is the
image of \(F_x\) under an increasing homeomorphism and is therefore
nondegenerate.  This yields uncountably many pairwise disjoint nonempty open
subintervals of \(J_0\).  But \(J_0\), being an interval in
\(\overline{\mathbb R}\), has a countable dense subset, and each member of a
pairwise disjoint family of nonempty open intervals contains a distinct point
of that subset.  This is impossible.  Hence every \(F_x\) is a singleton.
\end{proof}

Let \(\varphi(x)\) be the unique point of \(F_x\).

\begin{proposition}\label{prop:spatial}
The map
\[
 \varphi:(0,1)\longrightarrow J_0
\]
is an increasing homeomorphism satisfying
\begin{equation}\label{eq:phi-covariance}
 \varphi(gx)=e(g)\varphi(x)\qquad(g\in G,\ x\in(0,1)).
\end{equation}
Equivalently, with \(\pi=\varphi^{-1}\),
\[
 e(g)|_{J_0}=\pi^{-1}g\pi\qquad(g\in G).
\]
\end{proposition}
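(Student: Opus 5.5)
The plan is to get all properties of \(\varphi\) from Lemma~\ref{lem:cuts}, Lemma~\ref{lem:K}, and the order structure of \(J_0\), in four steps.

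\textbf{Monotonicity and covariance.} Since \(F_x<F_z\) whenever \(x<z\), the map \(\varphi\) is strictly increasing. Restricting \eqref{eq:cut-covariance} to the singletons \(F_x=\{\varphi(x)\}\) gives \eqref{eq:phi-covariance} directly.

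\textbf{Surjectivity onto \(J_0\).} This is the step I expect to need the most care. Let \(y\in J_0\). By the last assertion of Lemma~\ref{lem:K}, \(y\in K(L)\) for some \(L\Subset(0,1)\). Split \((0,1)\) into \(S=\{x:\varphi(x)<y\}\) and \(T=\{x:\varphi(x)>y\}\), and suppose \(y\) is not attained.

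\begin{enumerate}[label=(\alph*)]
\item \emph{Both sets are nonempty.} If \(x\) lies to the right of \(\overline L\), then \(L\Subset(0,x)\). Definition \eqref{eq:cuts} then gives \(\varphi(x)=a_x\geq \ell_+(K(L))>y\). Symmetrically, if \(x\) lies to the left of \(\overline L\), then \(\varphi(x)=b_x\leq\ell_-(K(L))<y\).
\item \emph{The cut is sharp.} Let \(c=\sup S\). I expect \(c\in(0,1)\) and \(\varphi(c)=y\). If instead \(\varphi(c)<y\), I would choose \(M\Subset(c,c+\delta)\) with \(\delta\) small. Every \(x>c\) lies in \(T\), so \(\ell_-(K(M))\geq \varphi(c')\) for \(c'\) slightly left of \(M\), and \(\ell_+(K(M))\leq\varphi(x)\) for \(x\) right of \(M\).
\item \emph{The gap is excluded.} It remains to rule out the whole interval \((\varphi(c),y)\) missing every \(K(M)\). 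I would do this by showing that \(\bigcup_{x<c}K(L)\) and \(\bigcup_{x>c}K(R)\) exhaust all \(K(N)\) meeting that gap. This uses Lemma~\ref{lem:K}(i): any \(N\ni c\) has \(K(N)\) containing \(\varphi(c)\) together with points on both sides, since \(a_c\) and \(b_c\) are sup and inf of endpoints of the \(K\)'s inside \(K(N)\).
\end{enumerate}

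\textbf{Surjectivity via \(\Fix\) (alternative).} A cleaner route is to use the image itself. By \eqref{eq:phi-covariance}, \(\varphi((0,1))\) is \(e(G)\)-invariant, so its closure in \(J_0\) is \(e(G)\)-invariant. A complementary gap \((u,v)\) of the image inside \(J_0\), with \(u,v\) limits of the image, would then be permuted by \(e(G)\). Because \(G\) acts transitively on \((0,1)\) (Lemma~\ref{lem:transitive}), which is order-dense, I would show each gap is bounded by \(\varphi\) of some point or by a one-sided limit. Then \(e(g)\) maps gaps to gaps in an order-preserving way, just as in Lemma~\ref{lem:cuts}. Together with the disjointness and countability argument from that lemma, this forces there to be no gaps. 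I will try this \(\Fix\)-style argument first, and fall back on the explicit cut argument above if it does not close.

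\textbf{Homeomorphism and the final formula.} Once \(\varphi\) is an increasing bijection between open intervals of linear continua, it is automatically a homeomorphism for the order topologies, and hence for the given topologies. Writing \(\pi=\varphi^{-1}\) and applying \eqref{eq:phi-covariance} at \(x=\pi(y)\) gives
\[
e(g)y=\varphi(g\pi y),
\]
that is, \(e(g)|_{J_0}=\pi^{-1}g\pi\).
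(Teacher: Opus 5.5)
Your first and last steps are fine, and step (a) is correct. Using \(\bigcup_L K(L)=J_0\), it shows that every \(y\in J_0\) lies strictly between two values of \(\varphi\), which is exactly what prevents the image from stopping short of an end of \(J_0\). The gap is at the crux, namely excluding a jump of \(\varphi\) at \(c=\sup S\); neither of your sketches actually does this. Step (c) rests on a false claim. For every \(N\Subset(0,1)\) containing \(c\), the interval \(K(N)\) contains \(\varphi(c)\) in its interior, as you note yourself. So \(K(N)\) meets \((\varphi(c),y)\) without belonging to either of your two unions, and no contradiction with \(\bigcup_L K(L)=J_0\) is available that way. The \emph{Fix-style} alternative is in substance the paper's continuity argument: a jump at \(x\) is carried by \(e(g)\) to a jump at \(g(x)\), and Lemma~\ref{lem:transitive} plus separability produce uncountably many disjoint open intervals. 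But you claim it forces there to be no gaps at all, and countability cannot exclude a gap at an end of the image, such as the one between the left end of \(J_0\) and \(\inf\varphi((0,1))\). There is only one such gap, and it is \(e(G)\)-invariant. The paper handles that case separately: a finite endpoint of the invariant image interval inside \(J_0\) would be a common fixed point of \(e(G)\). Your (a) would also handle it, but you present the two routes as alternatives rather than combining them.

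The missing line is already in your step (b). There you observe that for \(M\Subset(c,1)\) and \(c'\in(c,\inf M)\) one has \(\ell_-(K(M))\geq b_{c'}=\varphi(c')>y\). But \(\varphi(c)=b_c\) is, by \eqref{eq:cuts}, the infimum of \(\ell_-(K(M))\) over exactly these \(M\). Hence \(\varphi(c)\geq y\), contradicting \(\varphi(c)<y\). The case \(\varphi(c)>y\) is symmetric, using \(\varphi(c)=a_c\). Put differently, once \(F_x\) is known to be a singleton, the definitions give \(\varphi(x)=a_x=\sup_{x'<x}\varphi(x')\) and \(\varphi(x)=b_x=\inf_{x'>x}\varphi(x')\). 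So \(\varphi\) is continuous with no further appeal to transitivity or separability, and (a) together with the intermediate value theorem yields surjectivity. With this added, your route is correct and somewhat more economical than the paper's. The paper proves continuity by transporting a hypothetical jump, and then obtains surjectivity from the \(e(G)\)-invariance of the image interval.
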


\begin{proof}
Lemma~\ref{lem:cuts} shows that \(x<z\) implies
\(\varphi(x)<\varphi(z)\), so \(\varphi\) is strictly increasing, and
\eqref{eq:cut-covariance} gives \eqref{eq:phi-covariance}.

A monotone map between real intervals can fail to be continuous only by a jump.  Suppose, for example,
that \(\varphi\) has a nonempty left jump interval at \(x\).  For any \(g\in G\),
continuity and monotonicity of the already given homeomorphism \(e(g)\), together
with \eqref{eq:phi-covariance}, carry that jump interval homeomorphically to the
left jump interval at \(g(x)\).  The same holds for right jumps.  By
Lemma~\ref{lem:transitive}, one jump would therefore produce uncountably many
pairwise disjoint nonempty open intervals in \(J_0\), contradicting the
separability argument used in Lemma~\ref{lem:cuts}.  Thus \(\varphi\) is
continuous.

A continuous strictly increasing map has interval image.  Write
\[
 \alpha=\lim_{x\downarrow0}\varphi(x),\qquad
 \beta=\lim_{x\uparrow1}\varphi(x)
\]
in the extended closure of \(J_0\).  Then
\(\varphi((0,1))=(\alpha,\beta)\), an open subinterval of \(J_0\).
Equation \eqref{eq:phi-covariance} makes this image \(e(G)\)-invariant.  If it
were proper, one of \(\alpha,\beta\) would be a finite point in the interior of
\(J_0\).  Every increasing homeomorphism preserving \((\alpha,\beta)\) setwise
fixes such an endpoint, giving a common fixed point of \(e(G)\) inside the
action component \(J_0\), a contradiction.  Therefore the image is all of
\(J_0\).  A continuous strictly increasing bijection between intervals is a
homeomorphism, and \eqref{eq:phi-covariance} gives the displayed conjugacy.
\end{proof}

\section{Excluding conjugation}\label{sec:exclude}

\begin{lemma}\label{lem:centralizer}
The centralizer of the standard action of \(G\) in
\(\Homeo^+((0,1))\) is trivial.
\end{lemma}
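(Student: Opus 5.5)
Let \(c\in\Homeo^+((0,1))\) commute with every \(g\in G\).  The plan is to show that \(c\) fixes every point of \((0,1)\), using only supports of elements of \(G\) and the transitivity of Lemma~\ref{lem:transitive}.

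First, commuting elements respect supports and fixed-point sets.  From \(cg=gc\) we get
\[
c(\supp(g))=\supp(cgc^{-1})=\supp(g),
\]
so \(c\) permutes the support components of \(g\) in their natural order and preserves \(\Fix(g)\) setwise.

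Second, we produce one fixed point of \(c\).  Fix \(x\in(0,1)\).  Suppose \(c(x)\neq x\), say \(c(x)>x\); the case \(c(x)<x\) is symmetric, or follows by replacing \(c\) with \(c^{-1}\).  Choose \(g\in G\) with a single support component \((p,q)\) satisfying \(x<q<c(x)\); the bump maps from the proof of Lemma~\ref{lem:transitive} provide such elements.  More precisely, take \(p<x\), so that \(x\in(p,q)\).  Since \(c\) maps support components of \(g\) to support components, \(c\) sends \((p,q)\) to a support component of \(g\).  That image contains \(c(x)\).  But \(c(x)>q\), so \(c(x)\) lies in \(\Fix(g)\), a contradiction.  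Hence \(c(x)=x\) for every \(x\), and therefore \(c=\mathrm{id}\).

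To make the key step rigorous, I only need the existence of an element of \(G\) whose support is exactly one interval \((p,q)\) with \(p<x<q<c(x)\).  The construction in Lemma~\ref{lem:transitive} gives a bump with support \((a,d)\) around any given point, and \(a\) and \(d\) can be made arbitrarily close to it.  Concretely, take the bump with \(x<y\) very close to \(x\), so that \(d=2y-x<c(x)\).  Its support \((2x-y,\,2y-x)\) contains \(x\), and \(h(x)-x>0\) there, so it is one interval by the same derivative argument as in Lemma~\ref{lem:h}.  This step is elementary.

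The main obstacle is only to make sure that commuting is used correctly: we must not assume that \(c\) normalizes rather than centralizes.  Here we genuinely have \(cgc^{-1}=g\), so the permutation argument applies directly.  Transitivity (Lemma~\ref{lem:transitive}) is not even needed beyond the existence of small bumps at every point.
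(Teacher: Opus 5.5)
Your proof is correct and is essentially the paper's argument: both take a small two-slope bump \(g\in G\) near \(x\) and play the invariance \(c(\supp(g))=\supp(g)\) off against \(c(x)\neq x\). The paper places \(\supp(g)\) inside a neighborhood \(U\) of \(x\) with \(U\cap c(U)=\varnothing\), while you put \(x\) inside \(\supp(g)=(2x-y,\,2y-x)\) and \(c(x)\) outside it; this is an equally valid variant. Note that the ``\(h(x)-x>0\)'' in your last paragraph should refer to the bump, not to the paper's \(h\).
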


\begin{proof}
Let \(c\) commute with \(G\), and suppose \(c(x)\ne x\).  Replacing \(c\) by
\(c^{-1}\) if necessary, assume \(x<c(x)\).  Choose
\(x<y<c(x)\).  By continuity of \(c\), a sufficiently small relatively compact
open interval \(U\ni x\) satisfies \(\sup U<\inf c(U)\), hence
\(U\cap c(U)=\varnothing\).  The two-slope bump construction from
Lemma~\ref{lem:transitive}, with all breakpoints in \(U\), gives
\(1\ne g\in G\) with \(\supp(g)\Subset U\).  Commutation gives
\[
 c(\supp(g))=\supp(cgc^{-1})=\supp(g),
\]
while the two sides lie in the disjoint intervals \(c(U)\) and \(U\), a
contradiction.  Hence \(c=1\).
\end{proof}

\begin{proposition}\label{prop:exclude}
No \(u\in A\) induces \(\alpha_h\) on \(e(G)\) by conjugation.
\end{proposition}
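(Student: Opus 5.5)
Suppose, for a contradiction, that some \(u\in A\) satisfies \(e(\alpha_h(f))=u^{-1}e(f)u\) for all \(f\in G\). We will show that \(u\) must restrict on \(J_0\) to a transported copy of \(h\), and then reach a contradiction from the fact that \(h\) has no least support component.

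\emph{Step 1: \(u\) preserves \(J_0\).} The relation says that \(u\) normalizes \(e(G)\), since \(\alpha_h\) is a bijection of \(G\). Lemma~\ref{lem:components}(i) then shows that \(u\) preserves \(\Fix(e(G))\) and permutes the action components of \(e(G)\) in order. By Proposition~\ref{prop:J0} these components have a least member \(J_0\), so \(u\) fixes \(J_0\) setwise and \(u|_{J_0}\in\Homeo^+(J_0)\).

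\emph{Step 2: identify \(u|_{J_0}\).} Let \(\pi=\varphi^{-1}\) be the conjugacy of Proposition~\ref{prop:spatial}, and put \(w=\pi\,u|_{J_0}\,\pi^{-1}\in\Homeo^+((0,1))\). Restricting the hypothesis to \(J_0\) and transporting by \(\pi\) gives
\[
 h^{-1}fh=w^{-1}fw\qquad(f\in G).
\]
Hence \(wh^{-1}\) centralizes the standard action of \(G\), and Lemma~\ref{lem:centralizer} gives \(w=h|_{(0,1)}\). Equivalently,
\[
 u|_{J_0}=\varphi\, h\,\varphi^{-1}.
\]
Its support in \(J_0\) is therefore \(\varphi(\supp h)=\bigcup_m\varphi(C_m)\). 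Each \(\varphi(C_m)\) is a support component of \(u\), because the points \(\varphi(x_{2m})\) are fixed and \(u\) moves every point of \(\varphi(C_m)\). By Lemma~\ref{lem:h} we have \(\varphi(C_{m+1})<\varphi(C_m)\) for all \(m\).

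\emph{Step 3: contradiction.} The family \(\{\varphi(C_m)\}_{m\ge0}\) is an infinite strictly descending chain of support components of \(u\). This is incompatible with the first fact recalled in Section~\ref{sec:models}: the support components, equivalently the breakpoint sequence, of every element of each of the six groups are well ordered from left to right. That fact applies to \(u\) directly, with no need to pass to the normal subgroup of Proposition~\ref{prop:interfaces}, since it is a property of every element of \(A\).

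\emph{Main obstacle.} The delicate point is Step 2, specifically justifying that \(u|_{J_0}\) is a homeomorphism of \(J_0\) to which Lemma~\ref{lem:centralizer} applies after transport by \(\varphi\). This rests entirely on Step 1 and Proposition~\ref{prop:spatial}. A further check is that the components \(\varphi(C_m)\) are genuine support components of \(u\) in the ambient interval. This holds because \(J_0\) is invariant under \(u\) and the support components of \(u\) inside \(J_0\) are exactly those of \(u|_{J_0}\). We must also confirm that the endpoints \(\varphi(x_{2m})\) lie in the interior of \(J_0\), so that no component escapes \(J_0\); this is automatic because \(\varphi\) maps \((0,1)\) onto \(J_0\) and \(x_{2m}\in(0,1)\).
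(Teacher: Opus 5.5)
Your proposal is correct and follows the paper's proof essentially step for step. The normalization hypothesis forces \(u\) to fix \(J_0\) setwise. Transporting by \(\pi\) and applying Lemma~\ref{lem:centralizer} gives \(u|_{J_0}=\varphi h\varphi^{-1}\). The sets \(\varphi(C_m)\), whose endpoints are fixed interior points of \(J_0\), are then full support components of \(u\) forming a descending chain, which contradicts the well-ordering of support components of every element of \(A\).
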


\begin{proof}
Suppose that
\begin{equation}\label{eq:implement}
 e(\alpha_h(f))=u^{-1}e(f)u\qquad(f\in G).
\end{equation}
Then \(u\) normalizes \(e(G)\).  Lemma~\ref{lem:components}(i) says that \(u\)
permutes the action components of \(e(G)\) in order, so it fixes the least one
\(J_0\) setwise.

Let \(\pi:J_0\to(0,1)\) be the increasing conjugacy from
Proposition~\ref{prop:spatial}, and put
\[
 v=\pi(u|_{J_0})\pi^{-1}.
\]
Transporting \eqref{eq:implement} to \((0,1)\) gives
\[
 v^{-1}fv=h^{-1}fh\qquad(f\in G).
\]
Thus \(vh^{-1}\) centralizes the standard action of \(G\), and
Lemma~\ref{lem:centralizer} yields \(v=h\).

By Lemma~\ref{lem:h}, the support components of \(h\) are
\(C_m=(x_{2m+2},x_{2m})\), with \(C_{m+1}<C_m\) and no least member.  Therefore
\[
 D_m=\pi^{-1}(C_m)
\]
is a component of \(\supp(u)\cap J_0\).  Its two endpoints
\(\pi^{-1}(x_{2m+2})\) and \(\pi^{-1}(x_{2m})\) are finite interior points of
\(J_0\) fixed by \(u\), because \(u|_{J_0}=\pi^{-1}h\pi\).  Hence no connected
component of the open set \(\supp(u)\) can cross either endpoint of \(D_m\).
Thus every \(D_m\) is a full support component of \(u\), not
merely a component of its restriction to \(J_0\).  Since
\(D_{m+1}<D_m\) for all \(m\), these support components form a nonempty family
with no least member, contradicting the well-ordering of support components of
an element of a Dlab group.  This proves the proposition.
\end{proof}

\begin{proof}[Proof of Theorem~\ref{thm:main}]
By Proposition~\ref{prop:alpha}, $\alpha_h$ is an order automorphism of $G$.  Proposition~\ref{prop:spatial} identifies the action of $e(G)$ on $J_0$ with the standard action of $G$, and Proposition~\ref{prop:exclude} shows that no element of any of the six groups induces $\alpha_h$ by conjugation.
\end{proof}

\section*{Report of AI use}
The main argument was generated with the assistance of Albilich, a generative artificial-intelligence assistant for mathematical research developed by the authors. All arguments were subsequently fully understood, completely rewritten, and independently verified by the authors.

\section*{Acknowledgements}
This work was partially supported by a grant from the Simons Foundation (\#918096, to YY).

\section*{Disclosure Statement}
The authors declare that they have no competing interests and no conflicts of interest.

\section*{Data Availability Statement}
Data sharing is not applicable to this article, as no data sets were generated or analysed during the current study.

\end{document}